\theoremstyle{plain}
\newtheorem{lemma}{Lemma}[section]
\newtheorem{proposition}[lemma]{Proposition}
\newtheorem{theorem}[lemma]{Theorem}
\newtheorem{corollary}[lemma]{Corollary}
\theoremstyle{definition}
\newtheorem{definition}[lemma]{Definition}
\newtheorem{remark}[lemma]{Remark}
\newtheorem{example}[lemma]{Example}
\newcommand{\executeiffilenewer}[3]{%
 \ifnum\pdfstrcmp{\pdffilemoddate{#1}}%
 {\pdffilemoddate{#2}}>0%
 {\immediate\write18{#3}}\fi%
}
\newcommand{%
 \executeiffilenewer{.svg}{.pdf}%
 {inkscape -z -D --file=.svg %
 --export-pdf=.pdf --export-latex}%
 \input{.pdf_tex}%
}[1]{%
 \executeiffilenewer{#1.svg}{#1.pdf}%
 {inkscape -z -D --file=#1.svg %
 --export-pdf=#1.pdf --export-latex}%
 \input{#1.pdf_tex}%
}
\newcommand{\Q}{\mathbb{Q}}
\newcommand{\Z}{\mathbb{Z}}
\newcommand{\G}{\mathbb{G}}
\newcommand{\C}{\mathbb{C}}
\newcommand{\N}{\mathbb{N}}
\newcommand{\sldeux}{\mathrm{SL}_2}
\renewcommand{\epsilon}{\varepsilon}
\renewcommand{\phi}{\varphi}
\DeclareMathOperator{\hm}{Hom}
\DeclareMathOperator{\tr}{Tr}
\DeclareMathOperator{\irr}{irr}
\DeclareMathOperator{\ab}{ab}
\DeclareMathOperator{\ad}{Ad}
\DeclareMathOperator{\alg}{alg}
\DeclareMathOperator{\rk}{rank}
\DeclareMathOperator{\redu}{red}
\DeclareMathOperator{\abel}{ab}
\DeclareMathOperator{\ir}{irr}
\DeclareMathOperator{\cent}{cen}
\DeclareMathOperator{\spa}{Span}
\DeclareMathOperator{\spec}{Spec}
\newcommand{\lie}{\mathrm{sl}_2}
\title{% fill in the title
Character varieties in $\sldeux$ and Kauffman skein algebras}
\author{% fill in your name
Julien March\'e}
\address{% fill in your affiliation
Institut de Math\'{e}matiques\\
Universit\'{e} Pierre et Marie Curie\\
75252 Paris c\'{e}dex 05\\
France}
\email{% fill in your e-mail address
julien.marche@imj-prg.fr}
\begin{document}

\maketitle
%\title{Character varieties in $\sldeux$ and skein algebras}
%\date{}
%\author{Julien March\'e}
%\address{Institut de Math\'{e}matiques\\
%Universit\'{e} Pierre et Marie Curie\\
%75252 Paris c\'{e}dex 05\\
%France}
%\email{julien.marche@imj-prg.fr, }
%\begin{document}
%\maketitle
\begin{abstract}
These lecture notes concern the algebraic geometry of the character variety of a finitely generated group in SL$_2(\C)$ from the point of view of skein modules. We focus on the case of surface and 3-manifolds groups and construct the Reidemeister torsion as a rational volume form on the character variety. 
\end{abstract}
\section{Introduction}
These notes collect some general facts about character varieties of finitely generated groups in $\sldeux$. We stress that one can study character varieties with the point of view of skein modules: using a theorem of K. Saito, we recover some standard results of character varieties, including a construction of a so-called tautological representation. This allows to give a global construction of the Reidemeister torsion, which should be useful for further study such as its singularities or differential equations it should satisfy. 

The main motivation of the author is to understand the relation between character varieties and topological quantum field theory (TQFT) with gauge group SU$_2$. This theory - in the \cite{bhmv} version - makes fundamental use of the Kauffman bracket skein module, an object intimately related to character varieties. Moreover the non-abelian Reidemeister torsion plays a fundamental role in the Witten asymptotic expansion conjecture, governing the asymptotics of quantum invariants of 3-manifolds. 
However, these notes do not deal with TQFT, and strictly speaking do not contain any new result. Let us describe and comment its content.
\begin{enumerate}
\item The traditional definition of character varieties uses an algebraic quotient, although it is not always presented that way. On the contrary, the skein algebra is given by generators and relations. These two points of view are in fact equivalent. This was previously shown by Bullock up to nilpotent elements (see \cite{bul}) and by Przytycki and Sikora in general (see \cite{ps}) using work of Brumfiel and Hilden (\cite{bh}). Indeed, the proof is in a short article of Procesi (see \cite{procesi}) and follows from the fundamental theorems of invariant theory. We explain this in Section \ref{definitions}.
 \item A theorem of K. Saito (with unpublished proof) allows to recover a representation from its character in a very general situation. We present this theorem and advertise it by applying it in different situations in Section \ref{saito}.
\begin{itemize}
\item[-] Given a character $\chi$ with values in a field $k$, in which extension of $k$ lives a representation with character $\chi$?
\item[-] Compare the tangent space of the character variety with the twisted cohomology of the group with values in the adjoint representation.
\item[-] Define tautological representations with values in the field of functions of (an irreducible component of) the character variety.
\item[-] Study points of the character variety in valuation rings, related to Culler-Shalen theory.
\end{itemize}
\item Using the tautological representation gives a convenient framework for studying global aspects of the Reidemeister torsion. We show in what sense the Reidemeister torsion may be seen as a rational volume form on the character variety of a 3-manifold with boundary and give examples in Section \ref{torsion}.
\end{enumerate}

{\bf Acknowldegements:} These working notes grew slowly and benefited from many conversations. It is my pleasure to thank L. Benard, R. Detcherry, A. Ducros, E. Falbel, L. Funar, M. Heusener, T. Q. T. Le, M. Maculan, C. Peskine, J. Porti, R. Santharoubane, M. Wolff for their help and interest. I also thank the organizers of the conference {\it Topology, Geometry and Algebra of Low-Dimensional Manifolds} in Numazu (Japan), June 2015 for welcoming me and publishing these notes. 

\section{Two definitions of character varieties}\label{definitions}
In all the article, $k$ will denote a field with characteristic $0$. Most results of Section \ref{section_saito} hold for any field with characteristic different from 2. We stayed in characteristic 0 in view of our applications. 

\subsection{Algebraic quotient}\label{git}
Let $\Gamma$ be a finitely generated group. We define its representation variety into $\sldeux$ and we denote by $\hm(\Gamma,\sldeux)$ the spectrum of the algebra 
\[ A(\Gamma)=k[X^\gamma_{i,j}, i,j\in \{1,2\}, \gamma \in \Gamma]/(\det(X^\gamma)-1,X^{\gamma\delta}-X^\gamma X^\delta\text{ with } \gamma, \delta\in \Gamma)\]
In this formula, $X^\gamma$ stands for the matrix with entries $X^\gamma_{i,j}$, $i,j\in \{1,2\}$. In particular the last equation above is a collection of 4 equations. The name representation variety is justified by the following universal property which holds for any $k$-algebra $R$: 

\[\hm_{k-\rm alg}(A(\Gamma),R)=\hm(\Gamma,\sldeux(R))\]

Let $\sldeux(k)$ act on the space $\hm(\Gamma,\sldeux)$ by conjugation. This action is algebraic as it comes from the action $(g.P)(X^\gamma)=P(g^{-1}X^\gamma g)$ where we have $g\in \sldeux(k)$ and $P\in A(\Gamma)$. 

\begin{definition}
We define the {\it character variety} of $\Gamma$ and denote by $X(\Gamma)$  the spectrum of the algebra $A(\Gamma)^{\sldeux}$ of invariants.
\end{definition}

In other words, $X(\Gamma)$ is the quotient of $\hm(\Gamma,\sldeux)$ in the sense of geometric invariant theory. Standard arguments from this theory gives the following theorem:  

\begin{theorem}
If $k$ is algebraically closed, there is a bijection between the following sets:
\begin{itemize}
\item[-] The $k$-points of $X(\Gamma)$ (or equivalently $\hm_{k-\rm alg }(A(\Gamma)^{\sldeux},k)$)
\item[-] The closed orbits of $\sldeux(k)$ acting on $\hm(\Gamma,\sldeux(k))$
\item[-] The conjugacy classes of semi-simple representations of $\Gamma$ into $\sldeux(k)$ 
\item[-] The characters of representations in $\hm(\Gamma,\sldeux(k))$. 
\end{itemize}
\end{theorem}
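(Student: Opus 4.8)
The plan is to derive all four bijections from geometric invariant theory applied to the reductive group $\sldeux$ acting by conjugation, combined with the structural fact (established earlier via Procesi's theorem) that the invariant algebra $A(\Gamma)^{\sldeux}$ is generated by the trace functions $\gamma \mapsto \tr(X^\gamma)$. Since $k$ has characteristic $0$, the group $\sldeux$ is linearly reductive, so Nagata's theorem guarantees that $A(\Gamma)^{\sldeux}$ is finitely generated and that the quotient map $\pi\colon \hm(\Gamma,\sldeux) \to X(\Gamma)$ enjoys the standard separation properties of a GIT quotient.

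First I would establish the bijection between the $k$-points of $X(\Gamma)$ and the closed orbits. The fundamental property of the GIT quotient of a linearly reductive group states that $\pi$ is surjective on $k$-points, that two $k$-points of $\hm(\Gamma,\sldeux)$ have the same image under $\pi$ precisely when the closures of their orbits meet, and that each fiber of $\pi$ contains a unique closed orbit. Since distinct closed orbits have disjoint closures, sending a $k$-point of $X(\Gamma)$ to the unique closed orbit in its fiber gives the desired bijection between the first and second sets.

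Next I would identify closed orbits with conjugacy classes of semi-simple representations, i.e.\ show that the orbit of $\rho$ is Zariski closed if and only if $\rho$ is completely reducible. The key observation for the ``only if'' direction is that a reducible but non-semi-simple $\rho$ may be conjugated into the upper-triangular Borel subgroup without being diagonal; conjugating by $\mathrm{diag}(t,t^{-1})$ multiplies the off-diagonal entry by $t^{2}$, so letting $t\to 0$ degenerates $\rho$ to its diagonal semi-simplification, which then lies in the orbit closure but not in the orbit itself. Conversely, orbits of completely reducible representations are closed; this is the deeper direction and follows from the general theory of reductive group actions. Hence closed orbits correspond exactly to isomorphism classes, equivalently conjugacy classes, of semi-simple representations, matching the second and third sets.

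Finally I would match these with characters. A $k$-point of $X(\Gamma)$ is a homomorphism $A(\Gamma)^{\sldeux} \to k$, and since this algebra is generated by the traces $\tr(X^\gamma)$, such a point is determined by and determines the collection of values $\bigl(\tr(\rho(\gamma))\bigr)_{\gamma}$ for any representation $\rho$ lying over it, that is, the character $\chi_\rho$. Surjectivity of $\pi$ shows every such point arises from a representation, giving the bijection with the set of characters; to close the circle one checks that two semi-simple representations with the same character are conjugate, which over an algebraically closed field is the elementary fact that a semi-simple module is determined up to isomorphism by its character. The main obstacle is the equivalence ``closed orbit $\iff$ semi-simple representation'': the forward direction needs the explicit degeneration above (or the Hilbert--Mumford criterion), while the reverse requires the genuine closedness of orbits of completely reducible representations, which is where reductivity of $\sldeux$ is essential. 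The remaining steps are formal consequences of GIT and of the trace-generation of the invariant ring.
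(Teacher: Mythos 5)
Your argument is correct and is precisely the ``standard arguments from this theory'' that the paper invokes without writing them out: the GIT separation properties of the quotient by the linearly reductive group $\sldeux$, the identification of closed orbits with conjugacy classes of semi-simple representations via the one-parameter degeneration to the semi-simplification, and the trace-generation of the invariant ring to match points with characters. Since the paper gives no proof beyond this citation, your writeup is a faithful and correct expansion of exactly the intended route.
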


Recall that by character of a representation $\rho:\Gamma\to\sldeux(k)$ we mean the map $\chi_\rho:\Gamma\to k$ given by $\chi_\rho(\gamma)=\tr \rho(\gamma)$. This theorem will be made more precise in the sequel using the point of view of skein algebras. 

\begin{remark}
We point out the fact that the algebra $A(\Gamma)$ may have nilpotent elements, as the skein algebra. A big part of the literature on character varieties uses the reduction of these algebras: this is not the case of these notes. 
\end{remark}

\subsection{The skein algebra}
\begin{definition}
We define the {\it skein character variety} $X_s(\Gamma)$ as the spectrum of the algebra 

\[B(\Gamma)=k[Y_{\gamma},\gamma\in \Gamma]/(Y_1-2,Y_{\alpha\beta}+Y_{\alpha\beta^{-1}}-Y_\alpha Y_\beta\text{ with }\alpha,\beta\in \Gamma)\]
\end{definition}

One can show that $B(\Gamma)$ is a finitely generated $k$-algebra (see \cite{cs}, Proposition 1.4.1). Moreover, any representation $\rho:\Gamma \to \sldeux(k)$ gives rise to an algebra morphism $\chi_\rho: B(\Gamma)\to k$ by the formula $\chi_\rho(Y_\gamma)=\tr\rho(\gamma)$. This is a consequence of the famous trace relation: 
\[\tr(AB)+\tr(AB^{-1})=\tr(A)\tr(B)\quad\forall A,B\in \sldeux(k)\]

The character of the tautological representation $\rho:\Gamma\to \sldeux(A(\Gamma))$ defined by $\rho(\gamma)=X^\gamma$ is a map $\Phi:B(\Gamma)\to A(\Gamma)^{\sldeux}$. The following theorem as been proved by Przytyscki and Sikora, see \cite{ps}. In practice, it follows from \cite{procesi} Theorem 2.6, see also \cite{bh} and \cite{bul}.  
\begin{theorem}
For any field $k$ of characteristic 0 and any finitely generated group $\Gamma$, the map $\Phi:B(\Gamma)\to A(\Gamma)^{\sldeux}$ is an isomorphism.
\end{theorem}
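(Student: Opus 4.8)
The plan is to prove that $\Phi$ is an isomorphism by separately establishing that it is well-defined, surjective, and injective, with the last two steps reduced to the first and second fundamental theorems of invariant theory for $2\times 2$ matrices as in \cite{procesi}. For well-definedness I would check that the defining relations of $B(\Gamma)$ map to zero in $A(\Gamma)^{\sldeux}$. Since each $X^\beta$ has determinant $1$ in $A(\Gamma)$, the Cayley--Hamilton theorem gives $X^\beta + X^{\beta^{-1}} = \tr(X^\beta)\,\id$ in $M_2(A(\Gamma))$, using that $X^{\beta^{-1}} = (X^\beta)^{-1}$ because $X^\beta X^{\beta^{-1}} = X^{\beta\beta^{-1}} = \id$. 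Multiplying on the left by $X^\alpha$ and taking the trace yields $\tr(X^{\alpha\beta}) + \tr(X^{\alpha\beta^{-1}}) = \tr(X^\alpha)\tr(X^\beta)$, which is exactly the image of $Y_{\alpha\beta} + Y_{\alpha\beta^{-1}} - Y_\alpha Y_\beta$; together with $\tr(X^1) = \tr(\id) = 2$ this shows $\Phi$ descends to $B(\Gamma)$.

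For surjectivity, choosing generators $\gamma_1,\dots,\gamma_n$ of $\Gamma$ embeds $\hm(\Gamma,\sldeux)$ into $(\sldeux)^n$, and the first fundamental theorem asserts that in characteristic $0$ the invariant ring $A(\Gamma)^{\sldeux}$ is generated as a $k$-algebra by the trace functions $\tr(X^{\gamma_{i_1}}\cdots X^{\gamma_{i_r}})$ of words in the generators. Because the tautological representation $\rho(\gamma)=X^\gamma$ is a homomorphism, each such generator equals $\tr(X^{\gamma_{i_1}\cdots\gamma_{i_r}}) = \Phi(Y_{\gamma_{i_1}\cdots\gamma_{i_r}})$, so every algebra generator lies in the image and $\Phi$ is onto.

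The heart of the matter, and the step I expect to be the main obstacle, is injectivity, which is the content of the second fundamental theorem: one must show that every relation among these trace functions already follows from the relations defining $B(\Gamma)$. Procesi's result says that in characteristic $0$ the ideal of relations among matrix trace invariants is generated by the full polarizations of the Cayley--Hamilton identity, which in the $2\times 2$ case is a single degree-two identity. In the $\sldeux$ setting this identity is $X^\beta + X^{\beta^{-1}} = \tr(X^\beta)\,\id$, and imposing its polarized consequences for \emph{all} $\alpha,\beta \in \Gamma$ is precisely what the fundamental relation defining $B(\Gamma)$ does; thus the task is to match Procesi's relations with these.

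I would organize this matching in two steps. First, treat the free group $F$ on $\gamma_1,\dots,\gamma_n$, where $A(F) = \mathcal{O}(\sldeux)^{\otimes n}$ is the ring of generic $\sldeux$-matrices and Procesi's theorem applies directly; here the delicate point is to use $\det(X^\gamma)=1$ systematically to rewrite inverses $X^{\gamma^{-1}}$ as $\tr(X^\gamma)\,\id - X^\gamma$, so that the relations of $B(F)$ expressed over $\Gamma$ account for all of Procesi's trace identities expressed in positive words. Second, descend from $F$ to $\Gamma$ along the quotient $F \twoheadrightarrow \Gamma$: since $A(-)$ and $B(-)$ are functorial, one obtains a commutative square, and the surjection of invariant rings is controlled by linear reductivity of $\sldeux$ in characteristic $0$ (existence of the Reynolds operator), which is exactly what makes the passage to invariants compatible with the quotient by the relators of $\Gamma$ on both sides. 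Once these identifications are in place one concludes $\ker\Phi = 0$, and combined with the previous steps $\Phi$ is an isomorphism.
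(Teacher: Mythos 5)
Your well-definedness and surjectivity arguments are fine: the skein relation does follow from Cayley--Hamilton with $\det=1$, and the first fundamental theorem plus the Reynolds operator (to pass from $A(F)^{\sldeux}$ onto $A(\Gamma)^{\sldeux}$ for a presentation $F\twoheadrightarrow\Gamma$) gives surjectivity. The gap is in injectivity, specifically in your descent from the free group $F$ to $\Gamma$. Linear reductivity gives exactness of invariants, i.e.\ $A(\Gamma)^{\sldeux}=A(F)^{\sldeux}/\bigl(J\cap A(F)^{\sldeux}\bigr)$ where $J=\ker(A(F)\to A(\Gamma))$; it does \emph{not} tell you that $J\cap A(F)^{\sldeux}$ is generated, as an ideal of $A(F)^{\sldeux}$, by the trace elements $\tr(X^{rv})-\tr(X^{v})$ coming from the relators $r$ of $\Gamma$ --- which is what you need for the kernel of $B(F)\to B(\Gamma)$ to match the kernel of $A(F)^{\sldeux}\to A(\Gamma)^{\sldeux}$ under the free-group isomorphism. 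The ideal $J$ is generated by the four matrix \emph{entries} of each $X^{r}-\mathrm{Id}$, and showing that its invariant part is generated by traces requires the equivariant, matrix-valued form of the fundamental theorems, namely that $M_2(A(F))^{\mathrm{GL}_2}$ is spanned over $A(F)^{\mathrm{GL}_2}$ by the $X^{w}$. This is exactly the point that limited Bullock's version of this argument to a statement ``up to nilpotents''; the scheme-theoretic claim you are proving is strictly stronger, and the Reynolds operator alone will not deliver it.

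A secondary issue: even in the free-group case, your identification of Procesi's polarized Cayley--Hamilton relations with the skein relations is asserted rather than carried out (one must at least derive cyclic invariance, $Y_{\gamma}=Y_{\gamma^{-1}}$, and the degree-three fundamental trace identity from the single relation $Y_{\alpha\beta}+Y_{\alpha\beta^{-1}}-Y_{\alpha}Y_{\beta}$); this is classical but is genuine work. The paper sidesteps both difficulties at once by introducing the universal Cayley--Hamilton algebra $H(\Gamma)=k[\Gamma]/I$ with $\tr([\gamma])=[\gamma]+[\gamma^{-1}]$ and invoking Procesi's Theorem 2.6, which asserts directly for an \emph{arbitrary} $\Gamma$ (not just a free group) that $j:H(\Gamma)\to M_2(A(\Gamma))^{\mathrm{GL}_2}$ is an isomorphism; the map $\Phi$ is then read off as the restriction of $j$ to the center $A(\Gamma)^{\mathrm{GL}_2}\,\mathrm{Id}$. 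If you want to keep your two-step plan, the missing ingredient in the descent step is precisely this equivariant statement (or, equivalently, the Brumfiel--Hilden universal representation algebra used by Przytycki--Sikora).
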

The statement of Theorem 2.6 in \cite{procesi} is much more general and deals with algebras with trace satisfying the Cayley-Hamilton identity. We derive our statement from his below. 
\begin{proof}
Let $k[\Gamma]$ be the group algebra of $\Gamma$. We denote by $[\gamma]$ the generator associated to $\gamma\in \Gamma$ and set $\tr([\gamma])=[\gamma]+[\gamma^{-1}]$ that we extend by $k$-linearity. 
We define $H(\Gamma)=k[\Gamma]/I$ where $I$ is the two-sided ideal generated by the elements $\tr(x)y-y\tr(x)$, for any $x,y\in k[\Gamma]$. The trace factors to a $k$-linear endomorphism of $H(\Gamma)$. By direct computation, one checks that the following identities hold for any $x,y\in H(\Gamma)$:
\begin{itemize}
\item[(i)] $\tr(x)y=y\tr(x)$
\item[(ii)] $\tr(xy)=\tr(yx)$
\item[(iii)] $\tr(\tr(x)y)=\tr(x)\tr(y)$
\item[(iv)] $x^2-\tr(x)x+\frac{1}{2}(\tr(x)^2-\tr(x^2))=0$.
\end{itemize}
The last equation is called the Cayley-Hamilton identity of order 2. 
The map $j:H(\Gamma)\to M_2(A(\Gamma))$ defined by $j([\gamma])=X^\gamma$ is an algebra morphism preserving the trace. It is universal in the sense that for any $k$-algebra $B$ and morphism $j':H(\Gamma)\to M_2(B)$, there is a unique algebra morphism $\phi:A(\Gamma)\to B$ such that the following diagram commutes.
\[\xymatrix{ H(\Gamma)\ar[r]^j\ar[rd]^{j'} & M_2(A(\Gamma))\ar[d]^{M_2(\phi)} \\ & M_2(B)}\]
Denote by $G$ the group GL$_2(k)$. The universal property implies that if we compose $j$ with a conjugation $\pi_g$ with  $g\in G$, there is an automorphism $\phi_g$ of $A(\Gamma)$ such that $\pi_g\circ j=M_2(\phi_g)\circ j$. The action of $G$ on $A(\Gamma)$ is the one described in Subsection \ref{git}: the formula $\rho(g)=\pi_g\circ M_2(\phi_g)^{-1}$ defines an action of $G$ on $M_2(A(\Gamma))$ fixing $X^\gamma$ for all $\gamma$ and Theorem 2.6  of \cite{procesi} says the following:

\begin{theorem}[\cite{procesi}, 2.6] The map $j:H(\Gamma)\to M_2(A(\Gamma))^{{\rm GL}_2}$ is an isomorphism.
\end{theorem}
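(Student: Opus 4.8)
The plan is to prove injectivity of $j$ and surjectivity of $j$ onto $M_2(A(\Gamma))^{\mathrm{GL}_2}$ separately; the inclusion $j(H(\Gamma))\subseteq M_2(A(\Gamma))^{\mathrm{GL}_2}$ is immediate, since the action $\rho$ of $\mathrm{GL}_2(k)$ was built to fix every $X^\gamma$ and hence fixes the subalgebra they generate. Because products of generators multiply as $X^\gamma X^\delta=X^{\gamma\delta}$, that subalgebra is just the $k$-linear span $S=\operatorname{Span}_k\{X^\delta:\delta\in\Gamma\}$, and the trace terms cause no trouble thanks to the Cayley--Hamilton relation $X^\delta+X^{\delta^{-1}}=\tr(X^\delta)\,\id$ valid in $\sldeux(A(\Gamma))$; in particular $\tr(X^\delta)\,X^\epsilon=X^{\delta\epsilon}+X^{\delta^{-1}\epsilon}$ keeps us inside $S$. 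It thus remains to show (a) the only linear relations $\sum_\delta c_\delta X^\delta=0$ are those coming from the ideal $I$, and (b) every invariant matrix lies in $S$. These are respectively the second and first fundamental theorems of invariant theory for $2\times2$ matrices, transcribed to the group-algebra setting.

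First I would reduce to a free group. Since $\Gamma$ is finitely generated, choose a surjection $F=F_m\twoheadrightarrow\Gamma$; then $A(F)$ is the coordinate ring of $\sldeux^m$, which is reduced and smooth, while $A(\Gamma)=A(F)/J$ with $J$ generated by the matrix entries of $X^r-\id$ for the relators $r$, and correspondingly $H(\Gamma)=H(F)/(\,[r]-1\,)$. For the free group the classical invariant theory of generic matrices applies: in characteristic $0$, $\mathrm{GL}_2$ is linearly reductive, the scalar invariants $A(F)^{\mathrm{GL}_2}$ are generated by traces of words $\tr(X^\delta)$ (first fundamental theorem, via Schur--Weyl duality), and the equivariant $M_2$-valued invariants are generated over these scalars by the generic matrices, higher words being reduced by Cayley--Hamilton. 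Combined with the identity $\tr(X^\delta)\,X^\epsilon=X^{\delta\epsilon}+X^{\delta^{-1}\epsilon}$, this yields $M_2(A(F))^{\mathrm{GL}_2}=S_F$, i.e. surjectivity for $F$.

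For injectivity over $F$ I would invoke the theorem of Procesi and Razmyslov that every trace identity satisfied by $2\times2$ matrices is a consequence of the Cayley--Hamilton identity of order $2$; equivalently, the $T$-ideal of trace identities of $M_2$ is generated by the polarizations of identity (iv). Since $H(F)$ is defined by imposing \emph{exactly} relations (i)--(iv), any nonzero element of $\ker j$ would furnish a trace identity outside this $T$-ideal, a contradiction. This is the ``formal inverse to Cayley--Hamilton'' and is where the real work lies. Finally I would descend to $\Gamma$: the exact Reynolds operator attached to the linearly reductive group $\mathrm{GL}_2$ shows that forming invariants commutes with the quotient by the $\mathrm{GL}_2$-stable relator ideal, so both surjectivity and injectivity pass from $F$ to $\Gamma=F/N$.

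The main obstacle is injectivity, the second fundamental theorem: controlling all relations among the generic matrices — including in the nilpotent directions of the non-reduced ring $A(\Gamma)$, which cannot be detected by evaluating at geometric points — genuinely requires the trace-identity machinery rather than a naive point-counting argument. The subsidiary difficulty is the descent from $F$ to $\Gamma$, where one must match the entrywise relator ideal $J$ on the matrix side with the two-sided ideal $(\,[r]-1\,)$ on the $H$ side under the invariant functor; this is precisely the step that uses characteristic $0$ (reductivity of $\mathrm{GL}_2$ and exactness of the Reynolds operator) and would fail without it.
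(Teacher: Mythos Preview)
The paper does not prove this theorem: it is quoted verbatim as Theorem~2.6 of \cite{procesi} and used as a black box inside the proof of the isomorphism $B(\Gamma)\simeq A(\Gamma)^{\sldeux}$. So there is no ``paper's own proof'' to compare against; what you have written is a sketch of Procesi's argument itself.

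As such a sketch, your outline is sound and follows the classical route. Surjectivity over a free group is indeed the first fundamental theorem for matrix invariants (scalar invariants are traces of words; equivariant invariants are words in the generic matrices, reduced by Cayley--Hamilton), and injectivity is exactly the Procesi--Razmyslov ``second fundamental theorem'' that all trace identities of $M_2$ lie in the $T$-ideal generated by the degree-$2$ Cayley--Hamilton identity. You correctly identify this as the substantive step.

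One point to tighten in the descent. Exactness of $(-)^{\mathrm{GL}_2}$ gives you $M_2(A(\Gamma))^{\mathrm{GL}_2}=M_2(A(F))^{\mathrm{GL}_2}/M_2(J)^{\mathrm{GL}_2}$, and surjectivity for $\Gamma$ follows. For injectivity you then need $M_2(J)^{\mathrm{GL}_2}=j(K)$, where $K\subset H(F)$ is the two-sided ideal generated by the $[r]-1$. The inclusion $j(K)\subset M_2(J)^{\mathrm{GL}_2}$ is clear; the reverse inclusion does not fall out of the Reynolds operator alone, since Reynolds is not multiplicative and writing an invariant element of $M_2(J)$ as $\sum a_i(X^{r_i}-\id)b_i$ with $a_i,b_i$ invariant is exactly what needs arguing. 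In Procesi's treatment this is absorbed into the statement that $R\mapsto M_n(A_R)^{\mathrm{GL}_n}$ is naturally isomorphic to the identity on CH$_n$-algebras with trace, so the quotient is automatic; if you want to keep your free-group-then-descend structure, you should either invoke that naturality or supply a direct argument (e.g.\ via the trace-algebra description of $M_2(J)^{\mathrm{GL}_2}$). With that caveat, your proposal is a correct summary of how the cited theorem is proved.
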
 
To end the proof, we observe that $\Phi$ is the restriction of $j$ to the center. More precisely, the map $B(\Gamma)\to H(\Gamma)$ defined by $Y_\gamma\mapsto \tr([\gamma])$ is injective and its image by $j$ is the center of $M_2(A(\Gamma))^{G}$, that is $A(\Gamma)^{G}\text{Id}$. We end the proof by observing the equality $A(\Gamma)^G=A(\Gamma)^{\sldeux}$. 
\end{proof}
Thanks to this theorem, we can remove the 's' in $X_s(\Gamma)$. 

\subsection{Irreducible, reducible and central characters}
Given $\alpha,\beta\in \Gamma$, we define the following element of $B(\Gamma)$:
$$\Delta_{\alpha,\beta}=Y_\alpha^2+Y_\beta^2+Y_{\alpha\beta}^2-Y_\alpha Y_\beta Y_{\alpha\beta}-4.$$
The equation $\Delta_{\alpha,\beta}\ne0$ defines an open subset $U_{\alpha,\beta}$ of $X(\Gamma)$ 
and we set $$X^{\ir}(\Gamma)=\bigcup_{\alpha,\beta\in \Gamma} U_{\alpha,\beta}.$$

We begin our study with the following lemma, a slightly different version of Lemma 1.2.1 in \cite{cs}. We will say that a representation $\rho:\Gamma\to\sldeux(k)$ is absolutely irreducible if it is irreducible in an algebraic closure of $k$. 

\begin{lemma}\label{burnside}
Given a field $k$ and a representation $\rho:\Gamma\to\sldeux(k)$, $\rho$ is absolutely irreducible if and only if there exists $\alpha,\beta\in\Gamma$ such that $\tr(\rho(\alpha\beta\alpha^{-1}\beta^{-1}))\ne 2$.
\end{lemma}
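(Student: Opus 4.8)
The plan is to translate the trace condition into a statement about common eigenvectors over $\bar{k}$, and then to upgrade ``every pair of elements of $\rho(\Gamma)$ has a common eigenvector'' to ``all of $\rho(\Gamma)$ has a common eigenvector''. First I would record the Fricke identity
\[\tr(ABA^{-1}B^{-1})=\tr(A)^2+\tr(B)^2+\tr(AB)^2-\tr(A)\tr(B)\tr(AB)-2\]
valid for $A,B\in\sldeux$; comparing with the definition of $\Delta_{\alpha,\beta}$ this reads $\tr\rho([\alpha,\beta])-2=\Delta_{\alpha,\beta}(\rho)$, so the hypothesis $\tr\rho([\alpha,\beta])\neq 2$ is precisely the statement that $\rho$ lies in the open set $U_{\alpha,\beta}$. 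Since absolute irreducibility means that $\rho$ becomes irreducible after base change to $\bar{k}$, and a two-dimensional representation is irreducible exactly when it admits no invariant line, I may work throughout over $\bar{k}$.

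The core is the elementary fact that for $A,B\in\sldeux(\bar{k})$ one has $\tr(ABA^{-1}B^{-1})=2$ if and only if $A$ and $B$ have a common eigenvector. The implication $\Leftarrow$ is immediate: if $Av=\lambda v$ and $Bv=\mu v$ then $ABA^{-1}B^{-1}v=v$, so $[A,B]$ has $1$ as an eigenvalue and hence, being in $\sldeux$, has trace $2$. For $\Rightarrow$ I would argue contrapositively, putting $A$ in Jordan form over $\bar{k}$: either $A=\mathrm{diag}(a,a^{-1})$ with $a^2\neq1$, or $A$ is a single nontrivial unipotent block. A direct computation then gives $\tr[A,B]-2=-qr(a-a^{-1})^2$ in the first case and $\tr[A,B]-2=r^2$ in the second, where $q,r$ denote the relevant off-diagonal entries of $B$; the absence of a common eigenvector forces these entries to be nonzero, so $\tr[A,B]\neq2$.

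Granting this, the lemma splits cleanly. For $\Leftarrow$, if some pair satisfies $\tr\rho([\alpha,\beta])\neq2$ then $\rho(\alpha),\rho(\beta)$ share no eigenvector, so a fortiori $\rho(\Gamma)$ has no invariant line over $\bar{k}$ and $\rho$ is absolutely irreducible. For $\Rightarrow$ I prove the contrapositive: assuming $\tr\rho([\alpha,\beta])=2$ for all $\alpha,\beta$, every pair in $H:=\rho(\Gamma)$ shares an eigenvector, and I must deduce that $H$ has a common invariant line. If every non-scalar element of $H$ has a repeated eigenvalue, hence a unique eigenline, then the pairwise common eigenvectors force all these eigenlines to coincide. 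Otherwise I pick a non-scalar $A$ with distinct eigenvalues and eigenlines $L_1,L_2$; every element of $H$ must fix $L_1$ or $L_2$, so $H=\mathrm{Stab}(L_1)\cup\mathrm{Stab}(L_2)$ is a union of two subgroups, whence $H$ equals one of them and fixes a line. Either way $\rho$ is reducible over $\bar{k}$.

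The hard part is exactly this last ``pairwise $\Rightarrow$ global'' step, and it is genuinely group-theoretic rather than linear-algebraic. One might hope to combine Burnside's theorem, which makes the $\rho(\gamma)$ span $M_2(\bar{k})$ (equivalently the trace-free parts span $\lie$), with the vanishing of all $\Delta_{\alpha,\beta}$, reading $\Delta_{\alpha,\beta}$ as minus the Gram determinant of the trace-free parts of $\rho(\alpha),\rho(\beta)$ for the form $\tr(XY)$; but this cannot suffice, since a nondegenerate three-dimensional quadratic space over $\bar{k}$ admits spanning triples all of whose pairwise Gram determinants vanish. One must therefore use that $\{\rho(\gamma)\}$ is closed under multiplication, and it is precisely the union-of-subgroups argument together with the unipotent case analysis that supplies this.
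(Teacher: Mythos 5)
Your proof is correct, but it takes a genuinely different route from the paper's, in both directions. For the ``if'' direction the paper forms the Gram-type matrix $M_{ij}=\tr\rho(\gamma_i\gamma_j)$ for $\gamma_1=1,\gamma_2=\alpha,\gamma_3=\beta,\gamma_4=\alpha\beta$, observes $\det M=-(\tr\rho([\alpha,\beta])-2)^2$, and invokes Burnside's criterion via the resulting basis of $M_2(k)$; you instead prove the pointwise equivalence $\tr[A,B]=2\Leftrightarrow A,B$ share an eigenvector over $\bar k$ (your Jordan-form computations $-qr(a-a^{-1})^2$ and $r^2$ check out), which is more elementary and bypasses Burnside entirely. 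For the ``only if'' direction the two arguments diverge more substantially: the paper works with the fixed lines $F_{\alpha,\beta}=\ker(\rho([\alpha,\beta])-1)$ of the commutators, shows any two of them meet nontrivially by applying the hypothesis to the iterated commutator ($\tr[[\alpha,\beta],[\gamma,\delta]]=2+(xy)^2$), and notes that the resulting common line is $\rho$-invariant because conjugation permutes the $F_{\alpha,\beta}$; you work with the eigenlines of the group elements themselves, split on whether some element of $\rho(\Gamma)$ has distinct eigenvalues, and in that case use the fact that a group cannot be the union of two proper subgroups to extract an invariant line. Your closing remark correctly identifies where the group structure must enter, and your version makes that entry point explicit, whereas the paper's is more uniform (a single trace computation) and exhibits the invariant line concretely. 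One shared loose end, harmless in both write-ups: the degenerate case where every element of $\rho(\Gamma)$ is scalar (equivalently, in the paper's setup, where every $F_{\alpha,\beta}$ is all of $k^2$) should be dispatched with a sentence, since then reducibility is immediate for trivial reasons.
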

\begin{proof}
We first recall Burnside irreducibility criterion which states that $\rho$ is absolutely irreducible if and only if $\spa\{\rho(\gamma),\gamma\in \Gamma\}=M_2(k)$.

Let $M$ be the matrix defined by $M_{ij}=\tr(\rho(\gamma_i\gamma_j))$ for $i,j\in \{1,\ldots,4\}$ where $\gamma_1=1,\gamma_2=\alpha,\gamma_3=\beta$ and $\gamma_4=\alpha\beta$. A simple computation shows that $\det M=-\chi_\rho(\Delta_{\alpha,\beta})^2$ where we have $\chi_\rho(\Delta_{\alpha,\beta})=\tr \rho(\alpha\beta\alpha^{-1}\beta^{-1})-2$. Hence if there exists $\alpha,\beta$ such that $\chi_\rho(\Delta_{\alpha,\beta})\ne 0$, then $(\rho(\gamma_i))_{i=1..4}$ is a basis of $M_2(k)$ and by Burnside criterion, $\rho$ is absolutely irreducible.

Conversely, suppose that for all $\alpha,\beta\in \Gamma$ one has $\chi_\rho(\Delta_{\alpha,\beta})=0$. Then there exists a  non-zero  subspace $F_{\alpha,\beta}$ of $k^2$ fixed by the commutator $\rho([\alpha,\beta])$.  Suppose that there exists $\gamma,\delta$ such that $F_{\alpha,\beta}\cap F_{\gamma,\delta}=\{0\}$ then in a basis made of these two lines, one can write 
$\rho([\alpha,\beta])=\begin{pmatrix} 1& x\\ 0 &1\end{pmatrix}$ and $\rho([\gamma,\delta])=\begin{pmatrix} 1& 0\\ y &1\end{pmatrix}$ with $x$ and $y$ non zero.
We compute then $\tr([[\alpha,\beta],[\gamma,\delta]])=2+(xy)^2$. The hypothesis implies that $x$ or $y$ is zero which is impossible. 
This finally implies that $\bigcap_{\alpha,\beta}F_{\alpha,\beta}\ne\{0\}$. But this subset is $\rho$-invariant which implies that $\rho$ is reducible.
\end{proof}
Consider the closed subset $X^{\cent}(\Gamma)$ of $X(\Gamma)$ defined by the equations $Y_\gamma^2=4$ for all $\gamma\in\Gamma$. A $k$-point of $X^{\cent}(\Gamma)$ has the form $\phi(Y_\gamma)=2\epsilon(\gamma)$ for some $\epsilon\in H^1(\Gamma,\Z/2\Z)$. This is the character of the central representation $\rho:\gamma\mapsto \epsilon(\gamma)\rm{id}$. 

The closed subset $X(\Gamma)\setminus X^{\ir}(\Gamma)$ is denoted by $X^{\redu}(\Gamma)$: let us describe it more precisely. Let $\hm(\Gamma,\G_m)$ be the spectrum of the algebra 
$$C(\Gamma)=k[Z_\gamma,\gamma\in \Gamma]/(Z_\gamma Z_\delta-Z_{\gamma\delta},\gamma,\delta\in \Gamma)$$
We have for any $k$-algebra $R$,  $\hm_{k-\alg}(C(\Gamma),R)\simeq \hm(\Gamma,R^\times)$, which explains the notation.

Let $\sigma$ be the automorphism of $C(\Gamma)$ defined by $\sigma(Z_\gamma)=Z_{\gamma^{-1}}$ and $\pi:B(\Gamma)\to C(\Gamma)$, the morphism defined by $\pi(Y_{\gamma})=Z_{\gamma}+Z_{\gamma^{-1}}$. 

\begin{proposition}\label{reductible}
The map $\pi:B(\Gamma)\to C(\Gamma)$ is a morphism of algebras whose image is the $\sigma$-invariant part. 
Its kernel is the radical of the ideal generated by $\Delta_{\alpha,\beta}$. 

In other words, a $k$-point $\phi$ of $X^{\redu}$ lifts to a $k$-point of $\hm(\Gamma,\G_m)/\sigma$. Hence, there exists an extension $\hat{k}$ of $k$ (at most quadratic) and a morphism $\psi:\Gamma\to \hat{k}^\times$ such that $\phi(Y_\gamma)=\psi(\gamma)+\psi(\gamma)^{-1}$. 
\end{proposition}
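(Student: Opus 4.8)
The plan is to dispatch the first two assertions by direct computation and the kernel statement by combining reducedness of $C(\Gamma)$ with Lemma \ref{burnside} and the Theorem identifying the $k$-points of $X(\Gamma)$ with characters of representations. With the convention $Z_1=1$, the relations $Z_{\gamma\delta}=Z_\gamma Z_\delta$ give $\pi(Y_1)=2Z_1=2$ and
\[\pi(Y_{\alpha\beta})+\pi(Y_{\alpha\beta^{-1}})=(Z_\alpha+Z_{\alpha^{-1}})(Z_\beta+Z_{\beta^{-1}})=\pi(Y_\alpha)\pi(Y_\beta),\]
so $\pi$ respects the defining relations of $B(\Gamma)$ and is an algebra morphism. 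Each $\pi(Y_\gamma)=Z_\gamma+Z_{\gamma^{-1}}$ is $\sigma$-invariant, so $\operatorname{im}\pi\subseteq C(\Gamma)^\sigma$. Conversely, identifying $C(\Gamma)\cong k[\Gamma^{\ab}]$, the space $C(\Gamma)^\sigma$ is spanned over $k$ by the elements $w_g=Z_g+Z_{g^{-1}}$; since $w_gw_h=w_{gh}+w_{gh^{-1}}$ and $w_1=2$, this span is already a subalgebra, hence equals the subalgebra generated by the $w_g$, which is $\operatorname{im}\pi$. This gives $\operatorname{im}\pi=C(\Gamma)^\sigma$.

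For the kernel, the inclusion $\sqrt{(\Delta_{\alpha,\beta})}\subseteq\ker\pi$ is the easy half: a direct expansion yields $\pi(\Delta_{\alpha,\beta})=0$ (the discriminant $(Y_\alpha^2-4)(Y_\beta^2-4)$ maps to the square $(Z_\alpha-Z_{\alpha^{-1}})^2(Z_\beta-Z_{\beta^{-1}})^2$), and since $k[\Gamma^{\ab}]$ is reduced in characteristic $0$, its subring $\operatorname{im}\pi$ is reduced and $\ker\pi$ is radical. The hard half is $\ker\pi\subseteq\sqrt{(\Delta_{\alpha,\beta})}$. As both ideals are radical and, the base field being perfect, commute with the extension $k\subseteq\bar k$, I may assume $k$ algebraically closed; by the Nullstellensatz it then suffices to show that every maximal ideal $\mathfrak m\supseteq(\Delta_{\alpha,\beta})$ contains $\ker\pi$. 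Such an $\mathfrak m$ is, by the Theorem, the kernel of a character $\phi=\chi_\rho$ of a representation $\rho\colon\Gamma\to\sldeux(\bar k)$ with $\phi(\Delta_{\alpha,\beta})=0$ for all $\alpha,\beta$; by Lemma \ref{burnside} $\rho$ is not absolutely irreducible, hence conjugate to an upper-triangular representation whose diagonal is a homomorphism $\psi\colon\Gamma\to\bar k^\times$. Then $\phi(Y_\gamma)=\psi(\gamma)+\psi(\gamma)^{-1}$, so $\phi=\operatorname{ev}_\psi\circ\pi$ and $\ker\pi\subseteq\ker\phi=\mathfrak m$, as required.

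The final statement follows formally. A $k$-point $\phi$ of $X^{\redu}(\Gamma)$ satisfies $\phi(\Delta_{\alpha,\beta})=0$, hence $\ker\phi\supseteq\sqrt{(\Delta_{\alpha,\beta})}=\ker\pi$, and $\phi$ factors through a $k$-point $\bar\phi\colon C(\Gamma)^\sigma\to k$ of $\hm(\Gamma,\G_m)/\sigma$. To produce $\psi$ I would lift $\bar\phi$ along $C(\Gamma)^\sigma\hookrightarrow C(\Gamma)$: each $Z_\gamma$ satisfies $Z_\gamma^2-\pi(Y_\gamma)Z_\gamma+1=0$ over $C(\Gamma)^\sigma$, so $C(\Gamma)$ is integral of degree at most $2$ over its $\sigma$-invariants, and lying-over yields a prime above $\bar\phi$ with residue field $\hat k$ satisfying $[\hat k:k]\le2$. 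The associated map $\operatorname{ev}_\psi\colon C(\Gamma)\to\hat k$, $Z_\gamma\mapsto\psi(\gamma)$, defines a homomorphism $\psi\colon\Gamma\to\hat k^\times$ (invertible since $Z_\gamma Z_{\gamma^{-1}}=1$, multiplicative since $Z_{\gamma\delta}=Z_\gamma Z_\delta$) with $\phi(Y_\gamma)=\psi(\gamma)+\psi(\gamma)^{-1}$.

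The main obstacle is the inclusion $\ker\pi\subseteq\sqrt{(\Delta_{\alpha,\beta})}$, where one must genuinely use that the vanishing of all $\Delta_{\alpha,\beta}$ forces reducibility and that a reducible $\sldeux$-representation is triangularizable, and then control the field of definition. A representation-free alternative builds $\psi$ directly: fixing $\alpha_0$ with $\phi(Y_{\alpha_0})^2\ne4$, adjoining a root $a_0$ of $t^2-\phi(Y_{\alpha_0})t+1$ (the source of the quadratic $\hat k$), and setting $\psi(\gamma)=\bigl(\phi(Y_{\alpha_0\gamma})-a_0^{-1}\phi(Y_\gamma)\bigr)/(a_0-a_0^{-1})$; the work then shifts to checking, through the relations $\Delta_{\alpha_0,\gamma}=0$, that $\psi$ hits the correct root and is multiplicative, with the central case $\phi(Y_\gamma)^2=4$ for all $\gamma$ (giving $\psi$ valued in $\{\pm1\}$) treated separately.
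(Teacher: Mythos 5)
Your proposal is correct, but it reaches the key inclusion $\ker\pi\subseteq\sqrt{(\Delta_{\alpha,\beta})}$ by a genuinely different road than the paper. The paper never invokes representations at this point: for a field-valued morphism $\phi$ killing all $\Delta_{\alpha,\beta}$ it manipulates the trace relations directly to show that $\phi(Y_\gamma)=2$ for $\gamma\in[\Gamma,\Gamma]$ and $\phi(Y_{\gamma\delta})=\phi(Y_\gamma)$ for $\delta\in[\Gamma,\Gamma]$, so that $\phi$ factors through $B(\Gamma^{\ab})$, and then quotes the isomorphism $B(\Gamma^{\ab})\simeq C(\Gamma^{\ab})^{\sigma}=C(\Gamma)^{\sigma}$. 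That argument works over an arbitrary field, hence applies to the residue field of \emph{any} prime containing the $\Delta$'s, and needs neither finite generation of $B(\Gamma)$ nor the Nullstellensatz nor descent of radicals along $k\subseteq\bar k$. You instead base-change to $\bar k$ and use the GIT theorem together with Lemma \ref{burnside} to realize each maximal ideal containing the $\Delta$'s as the kernel of the character of a triangularizable representation. This is heavier machinery, but it is standard, correct, and has the merit of making the link with reducible representations explicit; your verification that $\operatorname{im}\pi=C(\Gamma)^{\sigma}$ and that $\ker\pi$ is radical is also more detailed than the paper's ``we prove easily''.

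One step needs repair. From ``each $Z_\gamma$ is quadratic over $C(\Gamma)^{\sigma}$'' you cannot conclude that the residue field of a prime of $C(\Gamma)$ lying over $\ker\bar\phi$ has degree at most $2$ over $k$: a field generated by many quadratic elements can have arbitrarily large degree. What makes the conclusion true is that $C(\Gamma)^{\sigma}$ is the invariant ring of the $\Z/2\Z$-action generated by $\sigma$, so the residue field extension at a prime above a given one is normal with Galois group a quotient of the decomposition subgroup of $\Z/2\Z$, hence of degree at most $2$; alternatively, the explicit construction you sketch at the end (adjoining one root of $t^{2}-\phi(Y_{\alpha_0})t+1$ and defining $\psi$ by the stated formula, with the central case treated separately) does the job. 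Either of these should replace the degree count as written.
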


\begin{proof}
We prove easily that $\pi$ is a homomorphism whose image is $C(\Gamma)^\sigma$. 
Let $k$ be a field and $\phi:B(\Gamma)\to k$ a morphism satisfying $\phi(\Delta_{\alpha,\beta})=0$ for all $\alpha,\beta\in \Gamma$. We write $\phi(\alpha)=\phi(Y_\alpha)$ for short: by assumption one has $\phi([\alpha,\beta])=2$. Moreover, if $\phi(\alpha)=\phi(\beta)=2$ then $\phi(\Delta_{\alpha,\beta})=0$ implies $(\phi(\alpha\beta)-2)^2=0$ and hence $\phi(\alpha\beta)=2$. We conclude that $\phi(\alpha)=2$ for any $\alpha\in [\Gamma,\Gamma]$. Moreover, if $\alpha\in \Gamma$ and $\beta\in [\Gamma,\Gamma]$, then $\phi(\alpha\beta)$ satisfies $(\phi(\alpha)-\phi(\alpha\beta))^2=0$ and hence $\phi(\alpha\beta)=\phi(\alpha)$. This proves that $\phi$ factors through a map $\phi^{\abel}:B(\Gamma^{\abel})\to k$ where $\Gamma^{\abel}=\Gamma/[\Gamma,\Gamma]$. 
Moreover, we clearly have $C(\Gamma^{\abel})=C(\Gamma)$ and the map $B(\Gamma^{\abel})\to C(\Gamma^{\abel})^{\sigma}$ is invertible. This proves the first part of the lemma. The second part follows. 
\end{proof}
\begin{remark}
We don't know if the $\Delta_{\alpha,\beta}$s generate the kernel of $\pi$. Moreover, it is well known that the deformations of reducible representations into irreducible ones are related to the Alexander module. It would be interesting to have an explicit description of the (co-)normal bundle of $X^{\redu}(\Gamma)$ in $X(\Gamma)$. 
More precisely, define the ideal $I$ by the following exact sequence:
\[\xymatrix{0\ar[r]& I \ar[r] &B(\Gamma)\ar[r]& C(\Gamma)^\sigma\ar[r]& 0}\]
Then there should be a relation between the $C(\Gamma)$-module $I/I^2\otimes_{C(\Gamma)^\sigma} C(\Gamma)$ and the module $H^1(\Gamma,C(\Gamma))$ where the action of $\gamma\in\Gamma$ is by multiplication by $Z_\gamma$. 
\end{remark}
 
\subsection{Functorial properties}
If $\phi:\Gamma\to \Gamma'$ is a group homomorphism, there is a natural map $\phi_*:B(\Gamma)\to B(\Gamma')$ mapping $Y_\gamma$ to $Y_{\phi(\gamma)}$. There is also a natural map $\phi_*:A(\Gamma)\to A(\Gamma')$ which makes the following diagram commute:

\[\xymatrix{ B(\Gamma)\ar[d]^\Phi\ar[r]^{\phi_*} & B(\Gamma')\ar[d]^{\Phi}\\ A(\Gamma)\ar[r]^{\phi_*} & A(\Gamma')}\]

\subsubsection{Double quotients}
Let us show an example which arises when computing the fundamental group of a $3$-manifold presented by a Heegard splitting. 

\begin{proposition}
Let $\Gamma,\Gamma_1,\Gamma_2$ be three finitely generated groups and $\phi_i:\Gamma\to \Gamma_i$ be two surjective morphisms. Denote by $\Gamma'$ the amalgamated product $\Gamma'=\Gamma_1\underset{\Gamma}{\star}\Gamma_2$. 

Then $X(\Gamma')$ is isomorphic to the fiber product $X(\Gamma_1)\underset{X(\Gamma)}{\times}X(\Gamma_2)$. 
\end{proposition}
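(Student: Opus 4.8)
The plan is to lift the statement to the representation varieties, where the amalgamated product is transparently a pushout, and then descend to the character varieties by taking $\sldeux$-invariants, the descent being controlled by the reductivity of $\sldeux$. First I would record how the functor $\Gamma\mapsto A(\Gamma)$ behaves on $\Gamma'=\Gamma_1\star_\Gamma\Gamma_2$. Since $\hm_{k-\alg}(A(\Gamma),R)=\hm(\Gamma,\sldeux(R))$ naturally in $R$, and since $\Gamma'$ is the pushout of $\phi_1,\phi_2$ in the category of groups, for every $k$-algebra $R$ one has $\hm(\Gamma',\sldeux(R))=\hm(\Gamma_1,\sldeux(R))\times_{\hm(\Gamma,\sldeux(R))}\hm(\Gamma_2,\sldeux(R))$. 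By Yoneda this produces a canonical isomorphism, compatible with the (rational) conjugation action of $\sldeux$,
\[ A(\Gamma')\;\cong\;A(\Gamma_1)\otimes_{A(\Gamma)}A(\Gamma_2). \]
This step uses nothing about surjectivity.

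Next I would bring in the surjectivity of the $\phi_i$. Because $\phi_i$ is onto, every generator $X^g_{ab}$ of $A(\Gamma_i)$ is the image of $X^\gamma_{ab}$ for a $\gamma$ with $\phi_i(\gamma)=g$, so the functorial map $\phi_{i*}\colon A(\Gamma)\to A(\Gamma_i)$ is surjective; write $J_i$ for its kernel, an $\sldeux$-stable ideal. Then $A(\Gamma_i)=A(\Gamma)/J_i$ and the isomorphism above becomes $A(\Gamma')=A(\Gamma)/(J_1+J_2)$. In other words, $\hm(\Gamma',\sldeux)$ is the scheme-theoretic intersection of the two closed $\sldeux$-stable subschemes $\hm(\Gamma_i,\sldeux)\subseteq\hm(\Gamma,\sldeux)$.

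Finally I would take invariants. As $\sldeux$ is reductive and $\mathrm{char}\,k=0$, the invariants functor is exact on rational $\sldeux$-modules; applied to $0\to J_i\to A(\Gamma)\to A(\Gamma_i)\to 0$ it gives $A(\Gamma_i)^{\sldeux}=A(\Gamma)^{\sldeux}/J_i^{\sldeux}$ with $J_i^{\sldeux}=J_i\cap A(\Gamma)^{\sldeux}$, and likewise $A(\Gamma')^{\sldeux}=A(\Gamma)^{\sldeux}/(J_1+J_2)^{\sldeux}$. The one genuinely non-formal point is the identity $(J_1+J_2)^{\sldeux}=J_1^{\sldeux}+J_2^{\sldeux}$, which I would prove by writing an invariant $x\in J_1+J_2$ as $x=a_1+a_2$ with $a_i\in J_i$ and applying the Reynolds projection $\mathcal{R}\colon A(\Gamma)\to A(\Gamma)^{\sldeux}$: it is $A(\Gamma)^{\sldeux}$-linear and carries each $\sldeux$-stable $J_i$ into $J_i^{\sldeux}$, so $x=\mathcal{R}(a_1)+\mathcal{R}(a_2)\in J_1^{\sldeux}+J_2^{\sldeux}$. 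Transporting everything along the isomorphism $\Phi\colon B\xrightarrow{\sim}A^{\sldeux}$ of the section's main theorem (its functoriality being exactly the commuting square of the previous subsection) then yields $B(\Gamma')=B(\Gamma)/(J_1^{\sldeux}+J_2^{\sldeux})=B(\Gamma_1)\otimes_{B(\Gamma)}B(\Gamma_2)$, that is $X(\Gamma')\cong X(\Gamma_1)\times_{X(\Gamma)}X(\Gamma_2)$.

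I expect the crux to be precisely the interplay in the last paragraph: taking invariants does not commute with tensor products in general, and it is the surjectivity of the $\phi_i$ that converts the fiber product into an intersection of \emph{closed} subschemes, so that exactness of invariants for a reductive group does the work. That the surjectivity hypothesis is essential is visible in its absence: for $\Gamma=1$ and $\Gamma_1=\Gamma_2=\Z$ one has $\Gamma'=F_2$, and $X(F_2)=\mathbb{A}^3\neq\mathbb{A}^2=X(\Z)\times X(\Z)$.
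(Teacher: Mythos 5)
Your proof is correct and follows essentially the same route as the paper: identify $A(\Gamma')$ with $A(\Gamma_1)\otimes_{A(\Gamma)}A(\Gamma_2)$ via the universal property, use surjectivity of the $\phi_i$ to rewrite this as $A(\Gamma)/(I_1+I_2)$, and conclude by the invariant-theoretic identity $(I_1+I_2)^{\sldeux}=I_1^{\sldeux}+I_2^{\sldeux}$. The only difference is that you spell out via the Reynolds operator the step the paper cites as a ``standard fact,'' which is a welcome addition rather than a divergence.
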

More geometrically, if we denote by $L_i$ the images of $X(\Gamma_i)$ in $X(\Gamma)$, then $X(\Gamma')$ is the (schematic) intersection $L_1\cap L_2$. 
\begin{proof}
Given any morphisms $\phi_i:\Gamma\to\Gamma_i$ and any $k$-algebra $R$, one has the following natural isomorphisms:
\begin{eqnarray*}
\hm_{k-alg}(A(\Gamma'),R)&=&\hm(\Gamma',\sldeux(R))\\
&=&\{\rho_i:\Gamma_i\to \sldeux(R), \rho_1\circ\phi_1=\rho_2\circ\phi_2\}\\
&=&\hm_{k-alg}(A(\Gamma_1)\underset{A(\Gamma)}{\otimes}A(\Gamma_2),R)
\end{eqnarray*}
Hence we have the isomorphism $A(\Gamma')=A(\Gamma_1)\underset{A(\Gamma)}{\otimes}A(\Gamma_2)$. We notice that the invariant part of this tensor product is not the tensor product of the invariant parts (think about a wedge of two circles). However, this holds when $A(\Gamma_i)$ is a quotient of $A(\Gamma)$, that is when the morphisms $\phi_i$ are surjective. 

Denoting by $I_i$ the kernel of the map $A(\Gamma)\to A(\Gamma_i)$, the result follows from the following standard fact in invariant theory: $A(\Gamma')=A(\Gamma)/(I_1+I_2)$ and $(I_1+I_2)^{\sldeux}=I_1^{\sldeux}+I_2^{\sldeux}$.

On the other hand, one shows easily that the map $B(\Gamma_1)\otimes_{B(\Gamma)}B(\Gamma_2)\to B(\Gamma')$ is an isomorphism, see for instance \cite{ps}.
\end{proof}

\subsubsection{Semi-direct products}\label{semi}
The following situation appears for fundamental groups of 3-manifolds which fiber over the circle.
Let $\Gamma$ be a finitely generated group and $\phi\in\operatorname{Aut}(\Gamma)$ an automorphism. Set $\Gamma'=\Gamma\rtimes_\phi\Z$ so that we have the following (split) exact sequence. Let $t=s(1)$.
\[\xymatrix{0\ar[r] &\Gamma\ar[r]^{\alpha}&\Gamma'\ar[r]&\Z\ar[r]\ar@/_1pc/[l]^s&0}\]
\begin{lemma}
Suppose that $k$ is algebraically closed and consider only closed points of the character varieties.
Then the map $\alpha^*:X(\Gamma')\to X(\Gamma)$ corestricted to $X^{\irr}(\Gamma)$ is a $\Z/2\Z$ principal bundle over $F$, where $F=\{x\in X^{\irr}(\Gamma), \phi^*x=x\}$.
\end{lemma}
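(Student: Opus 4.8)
The plan is to work throughout with closed points over the algebraically closed field $k$, using the dictionary from the theorems above: a closed point of $X^{\irr}(\Gamma)$ is the character of an absolutely irreducible representation $\rho\colon\Gamma\to\sldeux(k)$, determined up to conjugacy by that character, and by Lemma \ref{burnside} these characters are exactly the points where some $\Delta_{\alpha,\beta}$ is nonzero. Writing $t=s(1)$, the relation $t\gamma t^{-1}=\phi(\gamma)$ in $\Gamma'$ shows that a representation $\rho'\colon\Gamma'\to\sldeux(k)$ amounts to a pair $(\rho,T)$ with $\rho=\rho'\circ\alpha$ and $T=\rho'(t)$, subject to the single constraint
\[T\rho(\gamma)T^{-1}=\rho(\phi(\gamma))\qquad\text{for all }\gamma\in\Gamma.\]
I would first note that if $\rho$ is irreducible then $\rho'$ is too (an invariant line for $\rho'$ is invariant for $\rho$), so points of $X(\Gamma')$ lying over $X^{\irr}(\Gamma)$ really do come from irreducible $\rho'$.

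The next step is to identify the image of this locus with $F$. If $\rho=\rho'\circ\alpha$ is irreducible, the displayed constraint exhibits $T$ as an isomorphism $\rho\xrightarrow{\sim}\rho\circ\phi$, whence $\phi^*(\alpha^*[\rho'])=\alpha^*[\rho']$ and $\alpha^*[\rho']\in F$. Conversely, given $x\in F$ with irreducible representative $\rho$, the equality $\phi^*x=x$ together with the reconstruction of irreducible representations from their characters gives $\rho\circ\phi\cong\rho$; rescaling the intertwiner (possible since $k=\bar k$) yields some $T_0\in\sldeux(k)$ satisfying the constraint, hence a representation $\rho'$ lying over $x$. This proves surjectivity onto $F$.

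The core of the argument is the fiber count, and this is exactly where irreducibility is used. If $T$ and $T_0$ both satisfy the constraint then $T_0^{-1}T$ commutes with all $\rho(\gamma)$, so by Schur's lemma over $k=\bar k$ it is scalar; imposing $\det=1$ forces $T\in\{T_0,-T_0\}$. It then remains to see that the two lifts $\rho'_{\pm}$ (with $\rho'_\pm(t)=\pm T_0$) give distinct characters of $\Gamma'$: for $\gamma\in\Gamma$ one has $\chi_{\rho'_\pm}(\gamma t)=\pm\tr(\rho(\gamma)T_0)$, and since the $\rho(\gamma)$ span $M_2(k)$ (Burnside), the functional $\gamma\mapsto\tr(\rho(\gamma)T_0)$ is not identically zero, so $\rho'_+$ and $\rho'_-$ differ. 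Thus every fiber over $F$ has exactly two points.

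Finally I would produce the $\Z/2\Z$-structure. Let $\epsilon\colon\Gamma'\to\{\pm1\}$ be the composition of the projection $\Gamma'\to\Z$ with reduction mod $2$; twisting $\rho'\mapsto\rho'\otimes\epsilon$ preserves $\sldeux$, fixes the restriction to $\Gamma$ (so commutes with $\alpha^*$), and sends $T_0\mapsto-T_0$. By the fiber count this involution is free and permutes the two points of each fiber, exhibiting $\alpha^*$ over $F$ as a principal $\Z/2\Z$-bundle. The main obstacle is the last distinctness check: passing from ``the two matrices $T_0,-T_0$ are different'' to ``the two points of $X(\Gamma')$ are different,'' which genuinely requires the Burnside spanning property to rule out the degenerate possibility $\tr(\rho(\gamma)T_0)\equiv0$; the remaining steps are formal manipulations with Schur's lemma and the character-to-representation dictionary.
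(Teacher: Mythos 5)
Your proof is correct and follows the same core argument as the paper: the intertwining constraint $\rho'(t)\rho(\gamma)\rho'(t)^{-1}=\rho(\phi(\gamma))$ identifies the image with the fixed locus $F$, and Schur's lemma pins down $\rho'(t)$ up to sign. You additionally supply three steps the paper's terse proof leaves implicit --- surjectivity onto $F$ via the character-to-representation dictionary over $k=\bar{k}$, the verification that $\rho'_+$ and $\rho'_-$ have distinct characters (your Burnside/trace-pairing argument, or equivalently the observation that a conjugation intertwining them would be scalar), and the explicit free $\Z/2\Z$-action by twisting with the sign character --- so there are no gaps.
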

\begin{proof}
Let $\rho':\Gamma'\to\sldeux(K)$ be a representation which restricts to an irreducible representation $\rho:\Gamma\to\sldeux(K)$. 
This representation satisfies $\rho'(t)\rho(\gamma)\rho'(t)^{-1}=\rho(\phi(\gamma))$ hence the character $\chi_\rho$ is fixed by $\phi^*$. Suppose we have another representation $\rho':\Gamma'\to\sldeux(K)$ with the same restriction, then $\rho''(t)\rho'(t)^{-1}$ commutes with the image of $\rho$. Hence $\rho''(t)=\pm \rho'(t)$. There are precisely two preimages for $\rho$ and the lemma is proved. 
\end{proof}

\section{Applications of Saito's Theorem}\label{section_saito}

We know from the isomorphism between $X_s(\Gamma)$ and $X(\Gamma)$ that if $k$ is algebraically closed, any $k$-point of $X_s(\Gamma)$ is the character of a representation $\rho:\Gamma\to \sldeux(k)$. However this fact is non-trivial to prove directly: the following theorem of \cite{saito} allows it and has further applications.
\begin{theorem}\label{saito}
Let $R$ be a $k$-algebra and $\phi:B(\Gamma)\to R$ be a morphism of $k$-algebras. Suppose that there exists $\alpha,\beta\in \Gamma$ such that $\phi(\Delta_{\alpha,\beta})$ is invertible and $A,B\in \sldeux(R)$ such that $\tr A=\phi(Y_\alpha),\tr(B)=\phi(Y_\beta),\tr(AB)=\phi(Y_{\alpha\beta})$. 
Then, there is a unique representation $\rho:\Gamma\to \sldeux(R)$ such that $\rho(\alpha)=A,\rho(\beta)=B$ and for all $\gamma\in \Gamma, \tr(\rho(\gamma))=\phi(Y_\gamma)$. 
\end{theorem}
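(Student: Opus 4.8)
The plan is to use the hypothesis that $\phi(\Delta_{\alpha,\beta})$ is invertible to make $\id,A,B,AB$ into an $R$-basis of $M_2(R)$, to define $\rho$ by prescribing its traces against this basis, and then to obtain multiplicativity by recognising the assignment $\gamma\mapsto\rho(\gamma)$ as coming from the universal trace algebra $H(\Gamma)$ of the previous proof.

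First I would record the linear-algebra input. Exactly as in Lemma \ref{burnside}, the Gram matrix $N_{ij}=\tr(g_ig_j)$ of the trace form in the family $g_1=\id,\,g_2=A,\,g_3=B,\,g_4=AB$ has entries that, by Cayley--Hamilton, are polynomials in $\tr A,\tr B,\tr(AB)$, hence in $\phi(Y_\alpha),\phi(Y_\beta),\phi(Y_{\alpha\beta})$, and its determinant is $-\phi(\Delta_{\alpha,\beta})^2$. As $\phi(\Delta_{\alpha,\beta})$ is invertible, $N$ is invertible, so the trace form is perfect on $\spa_R\{g_i\}$ and $(g_i)$ is a free $R$-basis of $M_2(R)$ (a split injection between free modules of rank $4$ is an isomorphism, the cokernel being projective of rank $0$). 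I then define $\rho(\gamma)\in M_2(R)$ to be the unique matrix satisfying $\tr(\rho(\gamma)\id)=\phi(Y_\gamma)$, $\tr(\rho(\gamma)A)=\phi(Y_{\gamma\alpha})$, $\tr(\rho(\gamma)B)=\phi(Y_{\gamma\beta})$, $\tr(\rho(\gamma)AB)=\phi(Y_{\gamma\alpha\beta})$. Uniqueness in the statement is immediate, since any admissible representation must solve these four equations. Using the relations $Y_{\gamma^{-1}}=Y_\gamma$ and $Y_{\gamma^2}=Y_\gamma^2-2$ in $B(\Gamma)$ together with Cayley--Hamilton in $M_2(R)$, one checks directly that $\rho(1)=\id$, $\rho(\alpha)=A$, $\rho(\beta)=B$, $\rho(\alpha\beta)=AB$ and $\tr\rho(\gamma)=\phi(Y_\gamma)$.

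The real content is that $\gamma\mapsto\rho(\gamma)$ is multiplicative, and here I would argue through $H(\Gamma)$ rather than by hand. Since $\phi(\Delta_{\alpha,\beta})$ is invertible, $\phi$ factors through $B(\Gamma)[\Delta_{\alpha,\beta}^{-1}]$, so I may form the base change $H_R=H(\Gamma)\otimes_{B(\Gamma)}R$, an $R$-algebra with an $R$-valued trace still obeying the Cayley--Hamilton identity (iv). By Procesi's isomorphism $j\colon H(\Gamma)\xrightarrow{\sim}M_2(A(\Gamma))^{\mathrm{GL}_2}$, over the irreducible locus $\Delta_{\alpha,\beta}\neq0$ the invariant algebra is Azumaya of rank $4$ (the endomorphism algebra of the universal rank-$2$ bundle on the irreducible locus), so $H_R$ is free of rank $4$ over $R$ with basis $\{1,[\alpha],[\beta],[\alpha\beta]\}$, whose trace-form Gram determinant is again $-\Delta_{\alpha,\beta}^2$. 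Because $[\alpha],[\beta]$ and $A,B$ share the same traces and satisfy the same Cayley--Hamilton relations, the multiplication tables in the bases $\{1,[\alpha],[\beta],[\alpha\beta]\}$ and $\{\id,A,B,AB\}$ have identical structure constants (all polynomial in the three prescribed traces). Hence the $R$-linear map $J\colon H_R\to M_2(R)$ matching these bases is a trace-preserving algebra isomorphism, and $\rho(\gamma)=J([\gamma])$ agrees with the matrices defined above. Multiplicativity is now automatic from $[\gamma\delta]=[\gamma][\delta]$; moreover $\rho(\gamma)^2=\rho(\gamma^2)$ gives $\det\rho(\gamma)=\tfrac12\big(\phi(Y_\gamma)^2-\phi(Y_{\gamma^2})\big)=1$, so $\rho$ lands in $\sldeux(R)$, and the theorem follows.

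I expect the main obstacle to be exactly the freeness of $H_R$ on $\{1,[\alpha],[\beta],[\alpha\beta]\}$, that is, that every $[\gamma]$ is an $R$-combination of these four elements. This fails for reducible characters and is precisely where the invertibility of $\phi(\Delta_{\alpha,\beta})$ is indispensable. In elementary terms it amounts to the pairwise identity $\tr(\rho(\mu)\rho(\nu))=\phi(Y_{\mu\nu})$, which is easy on the subgroup $\langle\alpha,\beta\rangle$ but must be propagated to all of $\Gamma$; it is this propagation, packaged here as the Azumaya property over the irreducible locus, that carries the weight of the argument.
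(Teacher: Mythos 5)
Your argument is correct in outline but takes a genuinely different route from the paper: the paper's proof of Theorem \ref{saito} is only a sketch. It defines $\rho(\gamma)$ by solving the same linear system $MC_\gamma=T_\gamma$ that you write down, observes that $M$ is invertible because $\det M=-\phi(\Delta_{\alpha,\beta})^2$, and then defers the entire difficulty --- that the resulting map is multiplicative, lands in $\sldeux$, and has the prescribed traces --- to Saito's unpublished manuscript. You instead propose to close that gap internally by routing through the trace algebra $H(\Gamma)$ and Procesi's theorem. This is a legitimate and arguably more satisfying strategy: once one knows that $H(\Gamma)[\Delta_{\alpha,\beta}^{-1}]$ is generated as a module over $B(\Gamma)[\Delta_{\alpha,\beta}^{-1}]$ by $1,[\alpha],[\beta],[\alpha\beta]$, the rest of your write-up (invertibility of the Gram matrix forces these to be a basis of $H_R$; the structure constants are universal polynomials in the three traces and so match those of $\id,A,B,AB$; hence $J$ is a trace-preserving algebra isomorphism and $\rho=J\circ[\,\cdot\,]$ is multiplicative with $\det\rho(\gamma)=\tfrac12(\phi(Y_\gamma)^2-\phi(Y_{\gamma^2}))=1$ from $Y_{\gamma^2}=Y_\gamma^2-2$) goes through.

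The one place where you substitute a slogan for an argument is exactly the step you flag yourself: ``the invariant algebra is Azumaya of rank $4$ over the irreducible locus'' is not something you can read off from Procesi's Theorem 2.6 as quoted; it is itself a theorem of comparable depth (and the parenthetical ``endomorphism algebra of the universal rank-$2$ bundle'' is not accurate in general --- the failure of an Azumaya algebra to be an endomorphism algebra is precisely the Brauer obstruction discussed elsewhere in the paper). Fortunately you do not need the full Azumaya statement, only the generation claim, and that does follow directly from Procesi's isomorphism $j$: inside $M_2(A(\Gamma)[\Delta_{\alpha,\beta}^{-1}])$ the invariant matrices $\id,X^\alpha,X^\beta,X^{\alpha\beta}$ have invertible Gram matrix, so any $X^\gamma$ is a linear combination of them whose coefficients are obtained by inverting that Gram matrix against the vector $(\tr(X^{\gamma\gamma_i}))_i=(\Phi(Y_{\gamma\gamma_i}))_i$; both the Gram matrix and this vector have entries in $B(\Gamma)[\Delta_{\alpha,\beta}^{-1}]$, so the coefficients are central and invariant, and applying $j^{-1}$ transports the relation $X^\gamma=\sum_i c_iX^{\gamma_i}$ to $[\gamma]=\sum_i c_i[\gamma_i]$ in $H(\Gamma)[\Delta_{\alpha,\beta}^{-1}]$. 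With that substitution your proof is complete and self-contained, which the paper's is not.
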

\begin{proof}(Sketch)
Define $\gamma_i$ and $M$ as in Lemma \ref{burnside} . As $\det M=-\phi(\Delta_{\alpha,\beta})^2$, the matrix $M$ is invertible over $R$. Given any $\gamma\in \Gamma$, we denote by $C_\gamma$ the vector of coefficients of $\rho(\gamma)$ in the basis $(\rho(\gamma_i))_{i=1..4}$ and by $T_\gamma$ the vector $(\phi(Y_{\gamma\gamma_i}))_{i=1..4}$. We then have $M C_\gamma = T_\gamma$ which we solve by setting $C_{\gamma}=M^{-1}T_{\gamma}$ and define this way $\rho(\gamma)$. It remains to show that this actually defines a representation in $\sldeux$ whose traces are prescribed by $\phi$. This is a non-trivial consequence of the trace relations, we refer to \cite{saito} for a proof.  \end{proof}

\subsection{Points of $X(\Gamma)$ over arbitrary fields}
\begin{definition}
Let $k$ be a field and $\rho:\Gamma\to \sldeux(\hat{k})$ be a representation where $\hat{k}$ is an extension of $k$ and such that $\tr \rho(\gamma)\in k$ for all $\gamma\in \Gamma$. We set 
\[M(\rho)=\spa_k\{\rho(\gamma),\gamma\in \Gamma\}.\]
It is a sub-k-algebra of $M_2(\hat{k})$. We will say that two representations $\rho_i:\Gamma\to\sldeux(\hat{k}_i)$ where $i=1,2$ are equivalent if there is an isomorphism of $k$-algebras $\sigma: M(\rho_1)\to M(\rho_2)$  such that $\rho_2=\sigma\circ \rho_1$.
\end{definition}
We have the following proposition:
\begin{proposition}\label{propuniv}
Let $k$ be a field. There is a functorial isomorphism between $k$-points of $X^{\irr}(\Gamma)$ and equivalence classes of absolutely irreducible representations $\rho:\Gamma\to \sldeux(\hat{k})$ where $\hat{k}$ is a finite extension of $k$ and for all $\gamma$ in $\Gamma$, $\tr(\rho(\gamma))\in k$. 
\end{proposition}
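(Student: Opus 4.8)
The plan is to realize the stated correspondence as a pair of mutually inverse maps, the two key inputs being Saito's theorem (Theorem~\ref{saito}) and the observation that the matrices of an absolutely irreducible representation span a quaternion $k$-algebra. First I would send a representation to a point: given an absolutely irreducible $\rho:\Gamma\to\sldeux(\hat k)$ with $\tr\rho(\gamma)\in k$ for all $\gamma$, set $\phi_\rho(Y_\gamma)=\tr\rho(\gamma)$. The trace relation shows $\phi_\rho:B(\Gamma)\to k$ is a well-defined $k$-algebra morphism, and Lemma~\ref{burnside} provides $\alpha,\beta$ with $\phi_\rho(\Delta_{\alpha,\beta})=\tr\rho(\alpha\beta\alpha^{-1}\beta^{-1})-2\neq0$, so $\phi_\rho$ is a $k$-point of $X^{\irr}(\Gamma)$. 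I defer the proof that this descends to equivalence classes until the algebra $M(\rho)$ is understood.

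For the reverse direction I would start from a $k$-point $\phi$ of $X^{\irr}(\Gamma)$, choose $\alpha,\beta$ with $\Delta:=\phi(\Delta_{\alpha,\beta})\neq0$, and write $a=\phi(Y_\alpha)$, $b=\phi(Y_\beta)$, $c=\phi(Y_{\alpha\beta})$. Over the at most quadratic extension $\hat k=k(\sqrt{c^2-4})$ I would display explicit matrices, for instance $A=\left(\begin{smallmatrix}a&-1\\1&0\end{smallmatrix}\right)$ and $B=\left(\begin{smallmatrix}0&q\\q-c&b\end{smallmatrix}\right)$ with $q^2-cq+1=0$, which lie in $\sldeux(\hat k)$ and satisfy $\tr A=a$, $\tr B=b$, $\tr(AB)=c$. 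Since $\Delta$ is invertible in $\hat k$, Theorem~\ref{saito} yields a unique $\rho:\Gamma\to\sldeux(\hat k)$ with $\rho(\alpha)=A$, $\rho(\beta)=B$ and $\tr\rho(\gamma)=\phi(Y_\gamma)\in k$, so that $\phi_\rho=\phi$. The crucial structural point is that the coefficients in Saito's construction are rational over $k$: with $\gamma_1=1,\gamma_2=\alpha,\gamma_3=\beta,\gamma_4=\alpha\beta$, the matrix $M_{ij}=\phi(Y_{\gamma_i\gamma_j})$ has entries in $k$ and determinant $-\Delta^2$, and $\rho(\gamma)=\sum_i(M^{-1}T_\gamma)_i\,\rho(\gamma_i)$ with $M^{-1}T_\gamma\in k^4$. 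Hence $M(\rho)=\spa_k\{\rho(\gamma_i)\}$ is $4$-dimensional over $k$, with structure constants determined by $\phi$ alone, and $M(\rho)\otimes_k\bar k\cong M_2(\bar k)$ shows it is a $k$-form of $M_2$, i.e. a quaternion $k$-algebra.

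With the quaternion structure in hand the remaining points fall out. The matrix trace restricts on $M(\rho)$ to a $k$-valued functional which is the reduced trace of this central simple algebra, hence intrinsic; therefore any $k$-algebra isomorphism $\sigma:M(\rho_1)\to M(\rho_2)$ preserves it, and $\rho_2=\sigma\circ\rho_1$ forces $\phi_{\rho_1}=\phi_{\rho_2}$, so $[\rho]\mapsto\phi_\rho$ is well defined on equivalence classes. Conversely, two absolutely irreducible representations with the same character share the structure constants $M^{-1}T_\gamma$, so $\rho_1(\gamma_i)\mapsto\rho_2(\gamma_i)$ extends to a $k$-algebra isomorphism $\sigma$ with $\rho_2=\sigma\circ\rho_1$; this gives injectivity and simultaneously shows that the $\rho$ built above is well defined up to equivalence, independently of $\alpha,\beta$, of the chosen matrices, and of $\hat k$. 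The two maps are then mutually inverse.

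I expect the main obstacle to be exactly this last verification: identifying $M(\rho)$ as a central simple $k$-algebra so that the matrix trace is its reduced trace, which is what makes the abstract equivalence relation coincide precisely with equality of characters; the rationality of the coefficients $M^{-1}T_\gamma$ over $k$ is the fact that drives it. Functoriality in $k$ is then routine, since the morphism $B(\Gamma)\to k$, the matrices $A,B$, and Saito's formula are all compatible with base change along a field extension $k\to k'$.
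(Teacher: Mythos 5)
Your proposal is correct and follows essentially the same route as the paper: Lemma~\ref{burnside} for the forward direction, explicit trace-realizing matrices over an at most quadratic extension plus Theorem~\ref{saito} for the converse, and the $k$-algebra $M(\rho)$ to handle equivalence. Your argument is somewhat more explicit than the paper's on why the correspondence is well defined in both directions (rationality of the coefficients $M^{-1}T_\gamma$ and the identification of the matrix trace with the reduced trace of the quaternion algebra), but this is a fleshing-out of the same idea rather than a different method.
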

\begin{proof}[Proof of Proposition \ref{propuniv}.]
Given $\rho:\Gamma\to\sldeux(\hat{k})$ as in the statement, we define a morphism $\phi\in \hm_{\alg}(B(\Gamma),k)$ by setting $\phi(Y_\gamma)=\tr(\rho(\gamma))$. By Lemma \ref{burnside}, there exists $\alpha,\beta\in \Gamma$ such that $\chi_\rho(\Delta_{\alpha,\beta})\ne 0$. Hence, $\phi$ indeed corresponds to a $k$-point of $X^{\ir}(\Gamma)$. 

On the other hand, a $k$-point of $X^{\ir}(\Gamma)$ corresponds to a morphism $\phi$ in $\hm_{\alg}(B(\Gamma),k)$ such that there exists $\alpha,\beta\in \Gamma$ with $\phi(\Delta_{\alpha,\beta})\ne 0$. 

We define the matrices $A=\begin{pmatrix} \phi(Y_\alpha) & -1 \\ 1 & 0\end{pmatrix}$ and $B=\begin{pmatrix} 0 & -1/u \\ u & \phi(Y_\beta)\end{pmatrix}$ where $u$ belongs to an extension $\hat{k}$ of $k$ such that the equation $u^2+u\phi(Y_{\alpha\beta})+1=0$ holds ($\hat{k}$ can be chosen at most quadratic).  By Theorem \ref{saito}, there is a unique representation $\rho:\Gamma\to \sldeux(\hat{k})$ such that $\rho(\alpha)=A,\rho(\beta)=B$ and $\tr(\rho(\gamma))=\phi(Y_\gamma)$ for all $\gamma\in \Gamma$.

In order to show that this representation does not depend on the choice of $\alpha$ and $\beta$, we observe that if we make an other choice for $A',B' \in M_2(\hat{k}')$ which defines a representation $\rho':\Gamma\to\sldeux(\hat{k}')$, the map defined by $ \sigma(A)=A'$ and $\sigma(B)=B'$  extends to a $k$-isomorphism from $M(\rho)$ to $M(\rho')$ showing that the two representations are equivalent. 
\end{proof}

The algebra $M(\rho)$ associated to an absolutely irreducible representation $\rho$ is simple and central and hence defines a class in the Brauer group $Br(k)$.  Moreover, $\dim_k M(\rho)=4$. It follows that $M(\rho)$ is a quaternion algebra and in particular, its square is 0 in $Br(K)$. The set of $k$-points of $X^{\ir}(\Gamma)$ has then a partition into Brauer classes. We get finally the following proposition:
\begin{proposition}
Let $k$ be a field with $Br(k)=0$ (this occurs for instance if $k$ is algebraically closed or of transcendence degree one over an algebraically closed field). Then $k$-points in $X^{\ir}(\Gamma)$ correspond bijectively to conjugacy classes of absolutely irreducible representations $\rho:\Gamma\to\sldeux(k)$.
\end{proposition}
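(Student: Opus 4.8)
The plan is to build entirely on Proposition \ref{propuniv}, which already identifies the $k$-points of $X^{\irr}(\Gamma)$ with equivalence classes of absolutely irreducible representations $\rho:\Gamma\to\sldeux(\hat k)$, with $\hat k/k$ finite and all traces in $k$. The remaining content is that, under the hypothesis $Br(k)=0$, one may always take $\hat k=k$ and that the abstract equivalence of Proposition \ref{propuniv} specializes to ordinary conjugacy. The entire argument is organized around the four-dimensional central simple $k$-algebra $M(\rho)=\spa_k\{\rho(\gamma),\gamma\in\Gamma\}$ and its Brauer class.

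First I would pin down the role of the reduced norm, since this is what controls the determinant. As $\rho$ is absolutely irreducible, Burnside's criterion (as used in Lemma \ref{burnside}) gives $M(\rho)\otimes_k\hat k=M_2(\hat k)$, so $\hat k$ is a splitting field of $M(\rho)$ and the reduced norm $\mathrm{Nrd}:M(\rho)\to k$ is, after this splitting, the restriction of the determinant on $M_2(\hat k)$. In particular $\mathrm{Nrd}(\rho(\gamma))=\det\rho(\gamma)=1$ for all $\gamma$. Next, for surjectivity, I would use $Br(k)=0$: the class of the quaternion algebra $M(\rho)$ is then trivial, so there is a $k$-algebra isomorphism $\sigma:M(\rho)\xrightarrow{\sim}M_2(k)$. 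The composite $\sigma\circ\rho:\Gamma\to M_2(k)$ has the same traces and remains absolutely irreducible with $M(\sigma\circ\rho)=M_2(k)$. Because $\sigma$ is an isomorphism of central simple algebras it preserves reduced norms, and on $M_2(k)$ the reduced norm is the determinant; hence $\det(\sigma(\rho(\gamma)))=\mathrm{Nrd}(\rho(\gamma))=1$, so $\sigma\circ\rho$ actually lands in $\sldeux(k)$. Thus every $k$-point of $X^{\irr}(\Gamma)$ is the character of an absolutely irreducible representation defined over $k$ itself; since two choices of $\sigma$ differ by an automorphism of $M_2(k)$, the resulting class is well defined.

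Finally, for injectivity, suppose $\rho_1,\rho_2:\Gamma\to\sldeux(k)$ are absolutely irreducible with the same character. Then they determine the same $k$-point and hence are equivalent in the sense of Proposition \ref{propuniv}; here $M(\rho_1)=M(\rho_2)=M_2(k)$, so the equivalence is a $k$-algebra automorphism $\sigma$ of $M_2(k)$ with $\rho_2=\sigma\circ\rho_1$. By the Skolem--Noether theorem $\sigma$ is inner, i.e.\ conjugation by some $g\in\mathrm{GL}_2(k)$, so $\rho_1$ and $\rho_2$ are conjugate, which closes the bijection.

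I expect the main obstacle to be precisely the verification that the descended representation takes values in $\sldeux(k)$ rather than merely in $\mathrm{GL}_2(k)$: this is exactly what the reduced-norm computation above secures, and it is the step where the hypothesis $Br(k)=0$ genuinely enters, since triviality of $M(\rho)$ provides an untwisted splitting $M(\rho)\cong M_2(k)$ compatible with $\mathrm{Nrd}=\det$. A secondary point to handle carefully is whether conjugacy is meant in $\sldeux(k)$ or in $\mathrm{GL}_2(k)$: a conjugating element is determined only up to a scalar, so its determinant modulo squares is an a priori invariant, and one must decide (or note) that the natural equivalence here is conjugacy by $\mathrm{GL}_2(k)$ coming from Skolem--Noether.
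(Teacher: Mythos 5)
Your proof is correct and follows essentially the same route as the paper: reduce to Proposition \ref{propuniv}, use $Br(k)=0$ to trivialize the quaternion algebra $M(\rho)$, and apply Skolem--Noether for injectivity. The only (minor) difference is that you verify $\det=1$ intrinsically via the reduced norm, whereas the paper tensors the isomorphism $M_2(k)\to M(\rho)$ up to $\hat{k}$, realizes it as a conjugation by some $g\in\mathrm{GL}_2(\hat{k})$, and observes that $g\rho g^{-1}$ lands in $\sldeux(k)$ because conjugation preserves the determinant; your remark about $\mathrm{GL}_2$- versus $\sldeux$-conjugacy is a fair point that the paper leaves implicit.
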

\begin{proof}
An irreducible point $\phi:B(\Gamma)\to k$ corresponds to the character of a representation $\rho:\Gamma\to \sldeux(\hat{k})$ for some extension $\hat{k}$ of $k$. As the Brauer group is trivial, there is an algebra isomorphism $\sigma:M_2(k)\to M(\rho)$. Tensoring by $\hat{k}$, we find that $\sigma\otimes 1:M_2(\hat{k})\to M(\rho)\otimes\hat{k}$ is a $\hat{k}$-linear automorphism of $M_2(\hat{k})$ and hence a conjugation by some $g\in \mathrm{GL}_2(\hat{k})$ by Skolem-Noether theorem. Finally, the representation $g\rho g^{-1}:\Gamma\to \sldeux(\hat{k})$ takes its values in $k$. As two such representation $\rho$ and $\rho'$ are equivalent, it means that there is an automorphism $\sigma$ of $M_2(k)$ such that $\rho'=\sigma\circ \rho$ and this automorphism is a conjugation. 
\end{proof}

\begin{example}
Let $F_2$ be the free group on the generators $\alpha$ and $\beta$. 
It is well-known that the map $\Psi:\Q[x,y,z]\to B(F_2)$ defined by $\Psi(x)=Y_\alpha$, $\Psi(y)=Y_\beta$ and $\Psi(z)=Y_{\alpha\beta}$ is an isomorphism. Hence, one has a rational point of $X(F_2)$ by sending $x,y,z$ to $1$. However, there is no representation $\rho:F_2\to \sldeux(\Q)$ with this character. However, one can find a solution in some (non uniquely defined) quadratic extension of $\Q$. 
\end{example}

\subsection{Tangent spaces}
Let $k$ be a field and $\rho:\Gamma\to\sldeux(k)$ be an absolutely irreducible representation. The character $\chi_\rho$ may be seen as a $k$-point of $X^{\ir}(\Gamma)$. We derive from Saito's Theorem a simple proof of the following well-known result:

\begin{proposition}\label{tangent}
There is a natural isomorphism $T_{\chi_\rho}X_k(\Gamma)\simeq H^1(\Gamma,\ad_\rho)$ where $\ad_\rho$ is the adjoint representation of $\Gamma$ on $\lie(k)$ induced by $\rho$.  
\end{proposition}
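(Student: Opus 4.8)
The plan is to compute the Zariski tangent space directly from the deformation theory of representations, using Saito's Theorem to pass back and forth between deformations of a character and deformations of a representation. Recall that $X_k(\Gamma)=\spec B(\Gamma)$ and that the point $\chi_\rho$ corresponds to the algebra morphism $\phi=\chi_\rho:B(\Gamma)\to k$, $\phi(Y_\gamma)=\tr\rho(\gamma)$. Writing $R=k[\epsilon]/(\epsilon^2)$ for the dual numbers, the tangent space $T_{\chi_\rho}X_k(\Gamma)$ is by definition the set of algebra morphisms $\tilde\phi:B(\Gamma)\to R$ reducing to $\phi$ modulo $\epsilon$. I would construct a $k$-linear map $\Theta:T_{\chi_\rho}X_k(\Gamma)\to H^1(\Gamma,\ad_\rho)$ and show it is an isomorphism.

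To define $\Theta$, fix $\alpha,\beta\in\Gamma$ with $\phi(\Delta_{\alpha,\beta})\ne 0$, which exist by Lemma \ref{burnside} since $\rho$ is absolutely irreducible. Then $\tilde\phi(\Delta_{\alpha,\beta})$ has nonzero constant term, hence is invertible in the local ring $R$, so Theorem \ref{saito} applies: choosing lifts $A,B\in\sldeux(R)$ of $\rho(\alpha),\rho(\beta)$ with the traces prescribed by $\tilde\phi$ (such lifts exist because $\sldeux$ is smooth and the trace conditions are linear over the square-zero ideal $\epsilon R$), one obtains a representation $\tilde\rho:\Gamma\to\sldeux(R)$ with character $\tilde\phi$ and reducing to $\rho$. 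Write $\tilde\rho(\gamma)=(\id+\epsilon\,u(\gamma))\rho(\gamma)$. A direct computation shows that $\tilde\rho$ is a homomorphism if and only if $u(\gamma\delta)=u(\gamma)+\ad_{\rho(\gamma)}u(\delta)$, i.e. $u$ is a $1$-cocycle, and that $\det\tilde\rho(\gamma)=1$ forces $\tr u(\gamma)=0$, so $u\in Z^1(\Gamma,\ad_\rho)$. I then set $\Theta(\tilde\phi)=[u]$.

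The heart of the argument is to check that $[u]$ is independent of the auxiliary choices (of $\alpha,\beta$ and of the lifts $A,B$). Any two lifts $\tilde\rho,\tilde\rho'$ of $\rho$ with the same character $\tilde\phi$ are conjugate by some $g\in\mathrm{GL}_2(R)$: this is exactly the uniqueness content of Proposition \ref{propuniv} applied over $R$, since equal traces of $\alpha,\beta,\alpha\beta$ determine an absolutely irreducible pair up to conjugacy. Reducing modulo $\epsilon$, the image $\bar g$ centralizes $\rho(\Gamma)$, hence is scalar by Schur's lemma and absolute irreducibility; after rescaling by a central unit we may take $g=\id+\epsilon X$ with $X\in M_2(k)$. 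A short computation then gives $u'(\gamma)=u(\gamma)+X-\ad_{\rho(\gamma)}X$, and the correction term is automatically trace-free, so $u$ and $u'$ differ by an element of $B^1(\Gamma,\ad_\rho)$ and $[u]=[u']$. This makes $\Theta$ well defined, and $k$-linearity is immediate from the construction.

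Finally I would establish bijectivity. For surjectivity, given $u\in Z^1(\Gamma,\ad_\rho)$ the formula $\tilde\rho(\gamma)=(\id+\epsilon\,u(\gamma))\rho(\gamma)$ defines a representation into $\sldeux(R)$, and by the trace relations $\tilde\phi(Y_\gamma)=\tr\tilde\rho(\gamma)$ extends to an algebra morphism $B(\Gamma)\to R$ lifting $\phi$, with $\Theta(\tilde\phi)=[u]$. For injectivity, if $[u]=0$ then $\tilde\rho$ is conjugate to the constant deformation $\gamma\mapsto\rho(\gamma)$, so its character is constant in $\epsilon$, whence $\tilde\phi=\phi$ is the zero tangent vector. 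Naturality in $\Gamma$ follows from the functoriality of both sides. I expect the well-definedness step to be the main obstacle: it is where absolute irreducibility is essential, as Schur's lemma is precisely what collapses the conjugation ambiguity of the Saito lift into the coboundary space $B^1(\Gamma,\ad_\rho)$.
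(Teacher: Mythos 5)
Your proposal follows essentially the same route as the paper: identify tangent vectors with algebra morphisms to the dual numbers, use Lemma \ref{burnside} and Theorem \ref{saito} to promote such a morphism to a first-order deformation of $\rho$, read off a cocycle, and collapse the conjugation ambiguity of the Saito lift into coboundaries via Schur's lemma (the paper phrases this same point as the fact that every derivation of $M_2(k)$ is inner). The one step you under-justify is the existence of lifts $A,B\in\sldeux(k[\epsilon]/(\epsilon^2))$ with the three prescribed traces: linearity of the conditions over the square-zero ideal is not by itself enough, since a linear system need not be solvable; what is actually needed is surjectivity of the differential of $(A,B)\mapsto(\tr(A),\tr(B),\tr(AB))$ at $(\rho(\alpha),\rho(\beta))$, which holds precisely when $\tr[\rho(\alpha),\rho(\beta)]\ne 2$ and is the content of the paper's Lemma \ref{submersion}. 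With that lemma cited in place of your parenthetical, the argument is complete and matches the paper's.
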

\begin{proof}
It is well-known that a tangent vector at $\chi_\rho$ corresponds to an algebra morphism $\phi_\epsilon:B(\Gamma)\to k[\epsilon]/(\epsilon^2)$ such that $\phi_0=\chi_\rho$.

As $\rho$ is absolutely irreducible, there exists $\alpha,\beta\in \Gamma$ such that $\phi(\Delta_{\alpha,\beta})\ne 0$. Consider the map $\pi:\sldeux(k)\times\sldeux(k)\to k^3$ defined by $\pi(A,B)=(\tr(A),\tr(B),\tr(AB))$. Lemma \ref{submersion} below shows that this map is a submersion precisely on the preimage of the open set of $k^3$ defined by $\Delta_{\alpha,\beta}\ne 0$. This proves that we can find two matrices $A_\epsilon,B_\epsilon\in \sldeux(k[\epsilon]/(\epsilon^2))$ such that $\tr(A_\epsilon)=\phi_\epsilon(Y_\alpha),\tr(B_\epsilon)=\phi_\epsilon(Y_\beta)$ and $\tr(A_\epsilon B_\epsilon)=\phi_\epsilon(Y_{\alpha\beta})$. By Theorem \ref{saito}, there is a unique representation $\rho_\epsilon:\Gamma\to \sldeux(k[\epsilon]/(\epsilon^2))$ such that $\rho_\epsilon(\alpha)=A_\epsilon$, $\rho_\epsilon(\beta)=B_\epsilon$ and $\chi_{\rho_\epsilon}=\phi_\epsilon$.

We define the cocyle $\psi\in Z^1(\Gamma,\ad_{\rho})$ by the formula 
\begin{equation}\label{defcocycle}
\psi(\gamma)=\frac{d}{d\epsilon}\Big|_{\epsilon=0}\rho(\gamma)^{-1}\rho_\epsilon(\gamma).
\end{equation}
The map $\phi_\epsilon\mapsto \psi$ is a linear map $T_{\chi_\rho}X_k(\Gamma)\to H^1(\Gamma,\ad_\rho)$. We construct the inverse map by sending the cocycle $\psi$ to the character of the representation $\rho_\epsilon(\gamma)=\rho(\gamma)(1+\epsilon\psi(\gamma))$. 

If we had chosen other matrices $A'_\epsilon,B'_\epsilon$ defining a representation $\rho'_\epsilon$, then as in the proof of Proposition \ref{propuniv}, we would have found an automorphism $\sigma$ of $M_2(k[\epsilon](\epsilon^2))$ such that $\rho'=\sigma\circ\rho$. Moreover $\rho_\epsilon$ and $\rho'_\epsilon$ coincide up to first order so that one can write $\sigma=\text{Id}+\epsilon D$ for some derivation $D$ of $M_2(k)$. All such derivations have the form $D(X)=[\xi,X]$ for some $\xi\in \lie(k)$. A computation shows that $\psi'=\psi+d\xi$ and the map $\phi_\epsilon\to \psi$ is well-defined. Reciprocally, changing $\psi$ to $\psi+d\xi$ amounts in conjugating $\rho_\epsilon$ by $e^{\epsilon \xi}$, which does not change the character, hence the theorem is proved. 
\end{proof}

\begin{lemma}\label{submersion}
The map $\pi:SL_2(k)^2\to k^3$ defined by the formula \[\pi(A,B)=(\tr(A),\tr(B),\tr(AB))\] is a submersion at any pair $(A,B)$ such that $\tr [A,B]\ne 2$. 
\end{lemma}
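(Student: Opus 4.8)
The plan is to show that the differential $D\pi_{(A,B)}$ is surjective onto $k^3$ whenever $\tr[A,B]\neq 2$. First I would identify the tangent space $T_A\sldeux(k)$ with $A\,\lie(k)$, so that a general tangent vector at $(A,B)$ has the form $(AX,BY)$ with $X,Y\in\lie(k)$ traceless. Differentiating the three coordinate functions of $\pi$ and using cyclicity of the trace to write $\tr(AXB)=\tr(BAX)$ gives
\[D\pi_{(A,B)}(AX,BY)=\bigl(\tr(AX),\ \tr(BY),\ \tr(BAX)+\tr(ABY)\bigr).\]
Surjectivity of this map is equivalent to the linear independence of its three coordinate functionals on $\lie(k)\times\lie(k)$, so I would assume a relation $a\,\tr(AX)+b\,\tr(BY)+c\bigl(\tr(BAX)+\tr(ABY)\bigr)=0$ holds for all traceless $X,Y$ and aim to conclude $a=b=c=0$.

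Separating the $X$- and $Y$-dependence, this single relation splits into $\tr\bigl((aA+cBA)X\bigr)=0$ for all traceless $X$ and $\tr\bigl((bB+cAB)Y\bigr)=0$ for all traceless $Y$. Since the trace form is non-degenerate on $M_2(k)$ with $\lie(k)^{\perp}=k\,\id$, each of $aA+cBA=(aI+cB)A$ and $bB+cAB=(bI+cA)B$ is forced to be a scalar multiple of the identity. Because $A,B\in\sldeux(k)$ are invertible, the Cayley--Hamilton identities $A^{-1}=(\tr A)\id-A$ and $B^{-1}=(\tr B)\id-B$ convert these scalar conditions into linear relations of the shape $\lambda A+cB\in k\,\id$ and $cA+\mu B\in k\,\id$ among the matrices $\id,A,B$, for suitable scalars $\lambda,\mu$.

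The final input is that $\tr[A,B]\neq2$ forces $\id,A,B$ (indeed $\id,A,B,AB$) to be linearly independent in $M_2(k)$: this is exactly the computation $\det M=-(\tr[A,B]-2)^2$ appearing in the proof of Lemma \ref{burnside}, where $M$ is the Gram matrix of $\{\id,A,B,AB\}$ for the non-degenerate trace form. Linear independence of $\id,A,B$ applied to the two relations above yields $\lambda=\mu=c=0$, and then back-substituting into $aI+cB=\lambda A^{-1}$ and $bI+cA=\mu B^{-1}$ gives $a=b=0$ as well, proving the three functionals are independent and hence that $\pi$ is a submersion at $(A,B)$.

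The computation itself is routine; the only points requiring care are the identification of the orthogonal complement $\lie(k)^{\perp}=k\,\id$ under the trace form and the bookkeeping of which products survive after applying cyclicity. I expect the main conceptual obstacle to be isolating the precise algebraic hypothesis that makes the argument close up — namely the linear independence of $\id,A,B$ — and confirming that it is furnished exactly by the condition $\tr[A,B]\neq2$ via Lemma \ref{burnside}.
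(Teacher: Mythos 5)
Your proposal is correct and follows essentially the same route as the paper: identify tangent vectors as $(AX,BY)$, dualize surjectivity of $D\pi$ to a linear relation, use non-degeneracy of the trace form to convert it into $(aI+cB)A\in k\,\id$ and $(bI+cA)B\in k\,\id$, and conclude via the linear independence of $\id,A,B,AB$ furnished by the Gram-determinant computation of Lemma \ref{burnside}. Your explicit use of $A^{-1}=(\tr A)\id-A$ to land on a relation among $\id,A,B$ alone is a slightly more careful rendering of the paper's terser final step, but it is the same argument.
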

\begin{proof}
Let $\xi,\eta$ be in $\lie(k)$ corresponding to a tangent vector $(A\xi,B\eta) \in T_{(A,B)}\sldeux(k)^2$. If $D_{(A,B)}\pi$ is not surjective, there exists $u,v,w\in k$ such that $u\tr(A\xi)+v\tr(B\eta)+w\tr(A\xi B+AB\eta)=0$ for all $\xi,\eta\in \lie(k)$. This is possible if and only if there exists $\lambda,\mu\in k$ such that $uA+wBA=\lambda\text{Id}$ and $vB+wAB=\mu\text{Id}$. Hence, there is a linear relation in the family $(\text{Id},A,B,AB)$, which is equivalent to the equality $\tr([A,B])=2$, see Lemma \ref{burnside}.
\end{proof}

One has to be careful that the isomorphism between $T_{\chi_\rho}X(\Gamma)$ and $H^1(\Gamma,\ad_\rho)$ does not longer hold if $\rho$ is not absolutely irreducible. However there is still a natural map $\Xi:H^1(\Gamma,\ad_\rho)\to T_{\chi_\rho}X(\Gamma)$ sending $\psi$ to the character of $\rho(1+\epsilon \psi)$. Luckily, the skein interpretation of the character variety still allows to compute the tangent space at these points. Let us consider an extreme case, that is the tangent space at the character of the trivial representation. One has $H^1(\Gamma,\ad_\rho)=H^1(\Gamma,k)\otimes \lie(k)$ and the map $\Xi$ vanishes.

\begin{proposition}
For any finitely generated group $\Gamma$ and field $k$, the tangent space $T_{1}X(\Gamma)$ at the character of the trivial representation is isomorphic to the space of functions $\psi:\Gamma\to k$ satisfying the following quadratic functional equation (see \cite{quadratic}):
\[\forall \gamma,\delta\in \Gamma, \psi(\gamma\delta)+\psi(\gamma^{-1}\delta)=2\psi(\gamma)+2\psi(\delta)\]
\end{proposition}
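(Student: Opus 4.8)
The plan is to use, exactly as in the proofs of Propositions \ref{tangent} and \ref{propuniv}, that a tangent vector at the character $\chi_0$ of the trivial representation is the same datum as a $k$-algebra morphism $\phi_\epsilon:B(\Gamma)\to k[\epsilon]/(\epsilon^2)$ whose reduction modulo $\epsilon$ is $\chi_0$. Since the trivial representation sends every $\gamma$ to $\id$, we have $\chi_0(Y_\gamma)=\tr(\id)=2$, so any such lift can be written uniquely as $\phi_\epsilon(Y_\gamma)=2+\epsilon\,\psi(\gamma)$ for a function $\psi:\Gamma\to k$. The assignment $\phi_\epsilon\mapsto\psi$ is visibly $k$-linear and injective, so the whole statement reduces to identifying which functions $\psi$ occur, that is, for which $\psi$ the formula above respects the defining relations of $B(\Gamma)$.

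First I would impose the relations to first order in $\epsilon$. The relation $Y_1=2$ forces $\psi(1)=0$. For the trace relation $Y_{\alpha\beta}+Y_{\alpha\beta^{-1}}-Y_\alpha Y_\beta=0$, substituting $\phi_\epsilon(Y_\gamma)=2+\epsilon\psi(\gamma)$ and discarding the $\epsilon^2$ term yields precisely
\[\psi(\alpha\beta)+\psi(\alpha\beta^{-1})=2\psi(\alpha)+2\psi(\beta).\]
Conversely, since $B(\Gamma)$ is presented by exactly these generators and relations, any $\psi$ obeying this identity (which already forces $\psi(1)=0$ on setting $\beta=1$) defines a genuine algebra morphism $\phi_\epsilon$. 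Hence $T_1X(\Gamma)$ is canonically isomorphic to the $k$-vector space $S$ of functions satisfying the displayed equation.

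It then remains to reconcile this equation with the one in the statement, where $\alpha\beta^{-1}$ is replaced by $\gamma^{-1}\delta$. I would extract two symmetries directly from the functional equation: setting $\alpha=1$ gives $\psi(\beta^{-1})=\psi(\beta)$, so $\psi$ is even; and equating the right-hand sides of the equations for $(\alpha,\beta)$ and for $(\beta,\alpha)$ and using evenness (to identify $\psi(\alpha\beta^{-1})$ with $\psi(\beta\alpha^{-1})$) gives $\psi(\alpha\beta)=\psi(\beta\alpha)$, so $\psi$ is a class function. This second fact is in any case transparent from the relation $Y_{\alpha\beta}=Y_{\beta\alpha}$ already holding in $B(\Gamma)$. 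Combining the two symmetries one checks $\psi(\gamma\delta^{-1})=\psi(\delta^{-1}\gamma)=\psi(\gamma^{-1}\delta)$, which converts the derived equation into the stated one; running the same implications backwards shows the two equations cut out the same subspace $S$. This reconciliation of the two forms is the only genuinely fiddly point, the rest being the standard dual-numbers computation. I would also emphasize what makes the result work: no irreducibility hypothesis and no invertibility of any $\Delta_{\alpha,\beta}$ is needed, which is exactly why the $H^1$ description of the tangent space collapses here (the map $\Xi$ vanishes) while the skein presentation still computes $T_1X(\Gamma)$ as a space of quadratic functions.
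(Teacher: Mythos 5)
Your proposal is correct and follows exactly the paper's (very terse) argument: identify a tangent vector at $\chi_1$ with an algebra morphism $\phi_\epsilon:Y_\gamma\mapsto 2+\epsilon\psi(\gamma)$ into $k[\epsilon]/(\epsilon^2)$ and expand the defining relations of $B(\Gamma)$ to first order. The only thing you add beyond the paper is the (worthwhile) reconciliation of the relation $\psi(\alpha\beta)+\psi(\alpha\beta^{-1})=2\psi(\alpha)+2\psi(\beta)$ coming from the skein presentation with the form $\psi(\gamma\delta)+\psi(\gamma^{-1}\delta)=2\psi(\gamma)+2\psi(\delta)$ in the statement, via evenness and conjugation-invariance of $\psi$; this is correct and the two subspaces do coincide.
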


\begin{proof}
It is sufficient to write down the conditions for $\phi_\epsilon:Y_\gamma\mapsto 2+\epsilon\psi(\gamma)$ to be an algebra morphism from $B(\Gamma)$ to $k[\epsilon]/(\epsilon^2)$. 
\end{proof}
Observe that the map $S^2 H^1(\Gamma,k)\to T_{1}X(\Gamma)$ mapping $\lambda\mu$ to the map $\psi:\gamma\mapsto \lambda(\gamma)\mu(\gamma)$ is injective but not necessarily surjective. 

\subsection{Tautological representations}
Let $\Gamma$ be a finitely generated group and $k$ be a field. Consider an irreducible component $Y$ of $X(\Gamma)$. We will say that $Y$ is of irreducible type if it contains the character of an absolutely irreducible representation. More formally, this component corresponds to a minimal prime $\mathfrak{p}\subset B(\Gamma)$: we will write $k[Y]=B(\Gamma)/\mathfrak{p}$. The component $Y$ will be of irreducible type if and only if the tautological character $\chi_Y:B(\Gamma)\to k[Y]$ is irreducible. 

From Proposition \ref{propuniv}, there exists an extension $K$ (at most quadratic) of the field $k(Y)$ and a representation $\rho_Y:\Gamma\to \sldeux(K)$ such that $\chi_{\rho_Y}=\chi_Y$. We will call such a representation a tautological representation. 

\begin{remark}The Brauer group of a transcendance degree one extension of an algebraically closed field vanishes. Hence, if $k$ is algebraically closed and $\dim(Y)=1$, one can choose $K=\C(Y)$. This case will appear very often if $\Gamma$ is the fundamental group of a knot complement. 
\end{remark}
\begin{remark} If the component $Y$ is of reducible type, the tautological representation still exists and can be constructed directly. Moreover, it will be convenient to replace $k(Y)$ by an extension for more natural formulas. For instance, if $\Gamma^{\ab}=\Z$ and $\phi:\Gamma\to \Z$ is the abelianization morphism, then one component of $X(\Gamma)$ will be of reducible type. Moreover, it will be isomorphic to $\mathbb{A}^1$ via the map $\phi_*:B(\Gamma)\to B(\Z)\simeq k[Y_1]$. 
A tautological representation will be given by 
\[\rho_{Y}:\Gamma\to \sldeux(k(t)),\quad \rho_{Y}(\gamma)=\begin{pmatrix} t^{\phi(\gamma)}&0\\0&t^{-\phi(\gamma)}\end{pmatrix}\]
and the extension $k(Y_1)\subset k(t)$ corresponds to the substitution $Y_1=t+t^{-1}$. 
\end{remark}

\subsection{Valuation rings and Culler-Shalen theory}
We give in this section an example of result of Culler-Shalen theory (though not explicitly stated in their articles). The proof will follow standard lines for what concerns Culler-Shalen theory. We add it in order to convince the reader that the techniques of the preceding section are well-adapted to these questions. 

Let $M$ be a 3-manifold by which we mean a compact oriented and connected topological 3-manifold. An oriented and  connected surface $S\subset  M$ is said {\it incompressible} if
\begin{itemize}
\item[-] $S$ is not a 2-sphere,
\item[-] the map $\pi_1(S)\to \pi_1(M)$ induced by the inclusion is injective,
\item[-] $S$ is not boundary parallel.
\end{itemize}
 We will say that $M$ is small if it does not contain any incompressible surface without boundary.

Given a topological space $Y$ with finitely many connected components $(Y_i)_i\in I$ each of them having a finitely generated fundamental group, we set $X(Y)=\prod\limits_{i\in I} X(\pi_1(Y_i))$.  

\begin{proposition}
Let $M$ be a small 3-manifold. Then the restriction map $r:X(M)\to X(\partial M)$ is proper.
\end{proposition}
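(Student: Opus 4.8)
The plan is to verify the valuative criterion of properness. Since $B(\pi_1(M))$ and $B(\pi_1(\partial M))$ are finitely generated $k$-algebras, the schemes $X(M)$ and $X(\partial M)$ are Noetherian and $r$ is a finite-type morphism between affine (hence separated) schemes; consequently it suffices to check the lifting property against discrete valuation rings. Concretely, I would fix a discrete valuation ring $\mathcal{O}$ with fraction field $K$ and valuation $v$, together with a $K$-point of $X(M)$, that is a character $\chi : B(\pi_1(M)) \to K$, whose restriction to the boundary is $\mathcal{O}$-integral, meaning $v(\chi(Y_\delta)) \ge 0$ for every $\delta$ in the image of $\pi_1(\partial M)$. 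The goal is to show that $\chi$ is itself $\mathcal{O}$-integral, i.e.\ $v(\chi(Y_\gamma)) \ge 0$ for all $\gamma \in \pi_1(M)$, which provides the required (unique, by separatedness) lift $\spec \mathcal{O} \to X(M)$.

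Assume toward a contradiction that $v(\chi(Y_{\gamma_0})) < 0$ for some $\gamma_0$. First I would produce a representation realizing $\chi$: if $\chi$ is absolutely irreducible this is the tautological representation of Proposition \ref{propuniv} (obtained from Saito's Theorem \ref{saito}), giving $\rho : \pi_1(M) \to \sldeux(K')$ over an at most quadratic extension $K'$ of $K$; if $\chi$ is reducible I would instead use Proposition \ref{reductible} to factor it through a homomorphism $\pi_1(M) \to (K')^\times$. In either case I extend $v$ to $K'$ and let $\pi_1(M)$ act on the Bruhat--Tits tree $T$ of $\sldeux(K')$ attached to $v$. The hypothesis $v(\tr \rho(\gamma_0)) = v(\chi(Y_{\gamma_0})) < 0$ forces $\rho(\gamma_0)$ to be hyperbolic, so the action has no global fixed point; dually, the integrality of all boundary traces forces every peripheral element to be elliptic.

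Next I would invoke the Culler--Shalen construction (see \cite{cs}): a nontrivial action of $\pi_1(M)$ on a tree produces a non-empty incompressible surface $S \subset M$ dual to the action, the surface being non-empty precisely because the action has no fixed point. The crux is to arrange $S$ to be closed. Since each peripheral subgroup is finitely generated and generated by elliptic elements whose products are again elliptic, it fixes a point of $T$; such a vertex can be taken as the base point in the dual construction along each boundary component, so that $S$ meets $\partial M$ trivially and may be isotoped off the boundary. This yields a closed incompressible surface in $M$, contradicting the hypothesis that $M$ is small. Hence no such $\gamma_0$ exists, $\chi$ is $\mathcal{O}$-integral, and $r$ satisfies the valuative criterion, so it is proper.

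The main obstacle is this last step: turning ellipticity of the peripheral traces into a surface that is genuinely closed. One must check that a peripheral subgroup all of whose elements are elliptic actually possesses a global fixed point in $T$ --- for a torus boundary this reduces to the fact that two commuting elliptic isometries with elliptic product share a fixed point --- and that the Culler--Shalen dual surface can then be chosen disjoint from $\partial M$. A secondary subtlety is the reducible case, where the tree action is abelian (a translation action on a line) and the resulting closed surface is the fibre of an induced map $\pi_1(M) \to \Z$ vanishing on $\pi_1(\partial M)$; this too must be excluded by smallness.
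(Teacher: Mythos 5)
Your argument follows essentially the same route as the paper's: the valuative criterion of properness, the action of $\pi_1(M)$ on the Bruhat--Tits tree of $\sldeux$ over the valued field, Serre's fixed-point lemma to show each (finitely generated, trace-integral) peripheral subgroup fixes a vertex, and the relative Culler--Shalen construction producing a closed incompressible surface that contradicts smallness. The only differences are cosmetic refinements: you make the nontriviality of the tree action explicit via a hyperbolic element $\rho(\gamma_0)$ with $v(\tr\rho(\gamma_0))<0$, you treat the reducible case separately via Proposition \ref{reductible}, and you work over an at most quadratic extension of $K$ with an extended valuation where the paper instead realizes the character over $K$ itself by noting the Brauer group of a transcendence-degree-one extension vanishes.
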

\begin{proof}
Fix a base point $y$ in $M$ and let $I=\pi_0(\partial M)$. We choose a base point $y_i$ for each component $\partial_i M$ of $\partial M$. Set $\Gamma=\pi_1(M,y)$ and $\Gamma_i=\pi_1(\partial_i M,y_i)$.
As $X(\partial M)=\prod_{i\in I} X(\Gamma_i)$, it is sufficient to define the map $r_i:X(\Gamma)\to X(\Gamma_i)$ for any $i\in I$.  This map is the one induced by the inclusion $\partial_iM\subset M$ on fundamental groups.  

By the valuative criterion of properness (see Thm 4.7 in \cite{hartshorne}), $r$ is proper if and only if for any discrete valuation $k$-algebra $R$ with fraction field $K$ fitting in the following diagram there is a morphism $\spec(R)\to X(M)$ which makes the diagram commute. 
\begin{equation}\label{cvp}\xymatrix{
\spec(K)\ar[r]^\phi\ar[d]& X(M)\ar[d]^r\\
\spec(R)\ar[r]\ar@{.>}[ru]&X(\partial M)
}
\end{equation}
We will show that if such a morphism does not exist, then there is an incompressible surface by standard Culler-Shalen arguments. 
First, by the preceding section and remarking that $K$ has transcendence degree 1 over $k$, the morphism $\phi$ corresponds to a representation $\rho:\Gamma\to \sldeux(K)$. 
Let $T$ be the Bass-Serre tree on which $\sldeux(K)$ acts. Through $\rho$, the group $\Gamma$ acts on $T$. If this action is trivial - i.e. it fixes a vertex of $T$ - the representation $\rho$ is conjugate to a representation in $\sldeux(R)$. For any $\gamma\in \Gamma$, we have $\tr \rho(\gamma)\in R$. This shows that the map $B(\Gamma)\to R$ defined by $Y_{\gamma}\mapsto \tr\rho(\gamma)$ corresponds to a map $\spec(R)\to X(M)$ which yields a contradiction. 
Hence, the action is non-trivial and is dual to a non-empty essential surface $\Sigma\subset M$. 

For all $i\in I$, denote by $\rho_i:\Gamma_i\to \sldeux(K)$ the representation induced by $\rho$. From the diagram \eqref{cvp}, we get that for all $\gamma\in \Gamma_i$, $\tr\rho_i(\gamma)\in R$. 

Let us prove that $\gamma$ fixes a vertex in $T$. If $\rho_i(\gamma)=\pm 1$, then it is trivial. In the opposite case, one can find a vector $v\in K^2$ such that $v$ and $\rho_i(\gamma)v$ form a basis. The matrix of $\rho_i(\gamma)$ in that basis is $\begin{pmatrix} 0 & -1\\ 1& \tr(\rho_i(\gamma))\end{pmatrix}$. This proves that $\rho_i(\gamma)$ is conjugate to an element in $\sldeux(R)$ and hence fixes a vertex in $T$. A standard lemma (Corollaire 3 p90 in  \cite{serre}) shows that if every $\gamma\in \Gamma_i$ fixes a vertex in $T$, then $\Gamma_i$ itself fixes a vertex in $T$. 

The relative construction of dual surfaces shows that one can find an essential surface $\Sigma$ dual to the action without boundary, see Corollary 6.0.1 in \cite{shalen}. Any component of $\Sigma$ is an incompressible surface. This is not possible as $M$ is small. 
\end{proof}

\section{The Reidemeister torsion as a rational volume form}\label{torsion}

In this section, we construct the Reidemeister torsion of a 3-manifold with boundary on an irreducible component $Y\subset X(\Gamma)$. It will be a rational volume form on $Y$, that is an element of $\Lambda^d \Omega^1_{k(Y)/k}=\Omega^d_{k(Y)/k}$ where $d=\dim Y$. Some assumptions are necessary for this construction to work, and these are given in Subsection \ref{coho}. We will end this section with examples.

\subsection{The adjoint representation}
Let $\Gamma$ be a finitely generated group, $k$ a field and $\phi:B(\Gamma)\to k$ an irreducible character. Then, we now from Proposition \ref{propuniv} that there exists an extension $\hat{k}$ of $k$ and a representation $\rho:\Gamma\to\sldeux(\hat{k})$ such that $\chi_\rho=\phi$. Moreover, the sub-algebra $M(\rho)=\spa\{\rho(\gamma),\gamma\in\Gamma\}$ satisfies $M(\rho)\otimes_k\hat{k}\simeq M_2(\hat{k})$ and depends only on $\phi$ up to isomorphism. 

Recall the splitting $M_2(\hat{k})=\hat{k}\text{Id}\oplus \lie(\hat{k})$ and for any $X\in M_2(\hat{k})$, denote by $X_0=X-\frac 1 2 \tr(X)\text{Id}$ the projection on the second factor. Then set $M(\rho)_0=\spa_k\{\rho(\gamma)_0,\gamma\in \Gamma\}$. It is a 3-dimensional $k$-vector space on which $\Gamma$ acts by conjugation. We will denote by $\ad_\phi$ this $\Gamma$-module and call it the adjoint representation. 
The relation between $\ad_\phi$ and $\ad_\rho$ is the following: \[\ad_\rho=\ad_\phi\otimes_k \hat{k}.\] 
The representation $\ad_\phi$ is more natural as it depends only on the character. 

The Killing form is an invariant non-degenerate pairing on $M(\rho)_0$ and there is an invariant volume form $\epsilon \in \Lambda^3 \left(M(\rho)_0\right)^*$ given by $\epsilon(\zeta,\eta,\theta)=\tr(\zeta[\eta,\theta])$.

\begin{proposition}\label{differ}
Let $Y$ be an irreducible component of $B(\Gamma)$ of irreducible type corresponding to a minimal prime ideal $\mathfrak{p}$. Let $\ad_Y$ be the adjoint representation associated to the tautological character $\chi_Y$. There is an isomorphism between $\Omega^1_{B(\Gamma)/k}\otimes_{B(\Gamma)}k(Y)$ and $H_1(\Gamma,\ad_Y)$. This provides the following exact sequence:
\[ \mathfrak{p}/ \mathfrak{p}^2\otimes_{k[Y]}k(Y)\to H_1(\Gamma,\ad_Y)\to \Omega^1_{k(Y)/k}\to 0.\]
\end{proposition}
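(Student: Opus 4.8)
The plan is to reduce the proposition to a single cotangent-space identification, namely
$$\Omega^1_{B(\Gamma)/k}\otimes_{B(\Gamma)}k(Y)\simeq H_1(\Gamma,\ad_Y),$$
and to then deduce the exact sequence formally from the second fundamental (conormal) exact sequence of Kähler differentials attached to the quotient $B(\Gamma)\twoheadrightarrow k[Y]=B(\Gamma)/\mathfrak{p}$. The bulk of the work is the displayed isomorphism, which I would obtain by dualizing the generic analogue of Proposition \ref{tangent} and then converting cohomology into homology by Poincaré-type duality.

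For the isomorphism, I would first pass to duals. Since $B(\Gamma)$ is a finitely generated $k$-algebra, $\Omega^1_{B(\Gamma)/k}$ is a finitely generated $B(\Gamma)$-module, so the left-hand side is a finite-dimensional $k(Y)$-vector space, and its $k(Y)$-dual is the generic tangent space $\der_k(B(\Gamma),k(Y))=\hm_{B(\Gamma)}(\Omega^1_{B(\Gamma)/k},k(Y))$, where $k(Y)$ is viewed as a $B(\Gamma)$-algebra through the tautological character $\chi_Y$. A $k(Y)$-valued derivation is the same as a lift of $\chi_Y$ to a morphism $B(\Gamma)\to k(Y)[\epsilon]/(\epsilon^2)$, so I would run the dual-numbers argument in the proof of Proposition \ref{tangent} verbatim, now with base field $k(Y)$ and with the tautological representation $\rho_Y$ in place of $\rho$. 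Because $Y$ is of irreducible type, $\chi_Y$ is an irreducible character over $k(Y)$, so Lemma \ref{burnside}, Lemma \ref{submersion} and Theorem \ref{saito} all apply and produce, for each such lift, the cocycle $\gamma\mapsto \frac{d}{d\epsilon}\big|_{\epsilon=0}\rho_Y(\gamma)^{-1}\rho_{Y,\epsilon}(\gamma)$, giving a natural identification $\der_k(B(\Gamma),k(Y))\simeq H^1(\Gamma,\ad_Y)$.

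Next I would convert $H^1$ into $H_1$. The trace-zero part $\ad_Y=M(\rho_Y)_0$ carries the invariant nondegenerate Killing form, which furnishes a $\Gamma$-equivariant self-duality $\ad_Y\simeq\ad_Y^{*}$. Over the field $k(Y)$ one has the universal-coefficient duality $H^1(\Gamma,V^{*})\simeq H_1(\Gamma,V)^{*}$ for any $k(Y)[\Gamma]$-module $V$, valid because $\hm_{k(Y)}(-,k(Y))$ is exact and adjoint to $-\otimes_{k(Y)[\Gamma]}V$; taking $V=\ad_Y$ and using self-duality yields $H^1(\Gamma,\ad_Y)\simeq H_1(\Gamma,\ad_Y)^{*}$. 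Dualizing the tangent identification of the previous paragraph then gives $\Omega^1_{B(\Gamma)/k}\otimes_{B(\Gamma)}k(Y)\simeq\der_k(B(\Gamma),k(Y))^{*}\simeq H^1(\Gamma,\ad_Y)^{*}\simeq H_1(\Gamma,\ad_Y)$. For the exact sequence, I would start from the second fundamental exact sequence $\mathfrak{p}/\mathfrak{p}^2\to\Omega^1_{B(\Gamma)/k}\otimes_{B(\Gamma)}k[Y]\to\Omega^1_{k[Y]/k}\to 0$, tensor this right-exact sequence with the flat $k[Y]$-module $k(Y)$, use $\Omega^1_{k[Y]/k}\otimes_{k[Y]}k(Y)=\Omega^1_{k(Y)/k}$, and substitute the isomorphism just obtained for the middle term, producing exactly $\mathfrak{p}/\mathfrak{p}^2\otimes_{k[Y]}k(Y)\to H_1(\Gamma,\ad_Y)\to\Omega^1_{k(Y)/k}\to 0$.

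The step I expect to be the main obstacle is the $k(Y)$-rationality in the tangent identification: the tautological representation $\rho_Y$ is only defined over the at most quadratic splitting field $K\supseteq k(Y)$, so one must check that the identification with $H^1(\Gamma,\ad_Y)$ genuinely descends to $k(Y)$ rather than holding only after base change to $K$. I would handle this either by Galois descent, which is legitimate since $K/k(Y)$ is separable in characteristic $0$, or, more intrinsically, by carrying out the whole deformation argument inside the quaternion algebra $M(\rho_Y)$, which is defined over $k(Y)$, instead of in $2\times 2$ matrices over $K$. A secondary point to verify is that each isomorphism assembled above is natural, so that the resulting identification and the resulting exact sequence are canonical as asserted.
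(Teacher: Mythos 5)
Your proposal is correct in outline and, importantly, you have put your finger on exactly the issue that shapes the paper's proof: the tautological representation lives in $\sldeux(K)$ for an at most quadratic extension $K\supseteq k(Y)$, so a verbatim dualization of Proposition \ref{tangent} only gives the identification after base change to $K$. The paper explicitly declines the route you take (``instead of adapting the previous proof, we describe directly this dual picture'') and works from the start in homology with coefficients in the $k(Y)$-form $\ad_Y=M(\rho_Y)_0$: it defines $\Psi:\Omega^1_{B(\Gamma)/k}\otimes k(Y)\to H_1(\Gamma,\ad_Y)$ by the explicit formula $dY_\gamma\mapsto\rho(\gamma)_0\otimes[\gamma]$, checks well-definedness against the boundary formulas, and constructs the inverse by one single deformation over the square-zero algebra $\Lambda=k(Y)\oplus\epsilon\bigl(\Omega^1_{B(\Gamma)/k}\otimes k(Y)\bigr)$ (Lemma \ref{submersion} plus Saito), the inverse being $\xi\otimes\gamma\mapsto\frac{d}{d\epsilon}\tr\bigl(\xi\rho(\gamma)^{-1}\rho_\epsilon(\gamma)\bigr)$. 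Because both maps are written in terms of $\rho(\gamma)_0\in\ad_Y$ and of traces, which are $k(Y)$-valued on the quaternion algebra $M(\rho_Y)$, the $k(Y)$-rationality is automatic --- this is precisely your second proposed fix, carried out. Your route instead stays in cohomology, identifies $\der_k(B(\Gamma),k(Y))$ with $H^1(\Gamma,\ad_Y)$, and converts to $H_1$ via the Killing-form self-duality of $\ad_Y$ and universal coefficients over a field, then double-dualizes using finite-dimensionality; all of these steps are sound, and the final passage to the conormal sequence tensored with $k(Y)$ is exactly the ``standard commutative algebra'' the paper invokes. The trade-off: your argument reuses Proposition \ref{tangent} as a black box at the cost of a descent argument (which, if done by Galois descent rather than by working inside $M(\rho_Y)$, requires some care, since the Galois conjugate of $\rho_Y$ is only conjugate to $\rho_Y$ by some $g\in\mathrm{GL}_2(K)$, so equivariance of the identification is not completely free), whereas the paper's explicit homological maps make naturality and rationality visible at a glance and avoid two dualizations.
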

\begin{proof}
This follows by duality from Proposition \ref{tangent} with the twist that $\ad_Y$ is not exactly $\ad_{\rho_Y}$ which takes values in $\sldeux(\hat{K})$ for some extension $\hat{K}$ of $k(Y)$. Instead of adapting the previous proof, we describe directly this dual picture.

Recall that $H_1(\Gamma,\ad_Y)$ is the first homology of a complex $C_*(\Gamma,\ad_Y)$ where $C_k(\Gamma,\ad_Y)$ is generated by elements of the form $\xi \otimes[\gamma_1,\ldots,\gamma_k]$ with $\xi\in \ad_Y$ and $\gamma_1,\ldots,\gamma_k\in \Gamma$. 

We have the formulas $\partial \xi\otimes [\gamma]=\rho(\gamma)^{-1}\xi\rho(\gamma)-\xi$ and $\partial \xi\otimes[\gamma,\delta]=\rho(\gamma)^{-1}\xi\rho(\gamma)\otimes [\delta]-\xi\otimes[\gamma\delta]+\xi\otimes[\gamma]$. This allows to check directly that the map $dY_\gamma\mapsto \rho(\gamma)_0\otimes[\gamma]$ induces a well-defined map of $B(\Gamma)$-modules $\Omega^1_{B(\Gamma)/k}\to H_1(\Gamma,\ad_Y)$ and hence a map 
\[\Psi: \Omega^1_{B(\Gamma)/k}\otimes k(Y)\to H_1(\Gamma,\ad_Y).\]
%By universal coefficients (working over the field $k(Y)$), there is an isomorphism $H^1(\Gamma,\ad_Y)\simeq \hm(H_1(\Gamma,\ad_Y),k(Y))$. To show that the map si surjective, we pick $[z]\in H^1(\Gamma,\ad_Y)$ such that $\langle [z], \rho(\gamma)_0\otimes\gamma\rangle=0$ for all $\gamma$. This amounts in saying $\tr(\rho(\gamma)z(\gamma))=0$. Putting $\rho_\epsilon=\rho(1+\epsilon z)$, we get a representation $\Gamma\to \sldeux(\hat{k}[\epsilon]/(\epsilon^2))$ satisfying $\chi_\rho=\chi_{\rho_\epsilon}$. 

Reciprocally, let $\Lambda=k(Y)\oplus \epsilon \left(\Omega^1_{B(\Gamma)/k}\otimes k(Y)\right)$ be the $k(Y)$-algebra with $\epsilon^2=0$. The map $\phi:B(\Gamma)\to \Lambda$ given by $Y_\gamma\to Y_\gamma+\epsilon dY_\gamma$ is an algebra morphism. By Lemma \ref{submersion}, one can find $A_\epsilon,B_\epsilon\in \sldeux(\Lambda)$ whose traces are prescribed and by Saito's theorem (using the irreducibility of $\chi_Y)$, one finds a representation $\rho_\epsilon:\Gamma\to \sldeux(\Lambda)$ satisfying $\chi_{\rho_\epsilon}=\phi$. A simple check shows that the map $\xi\otimes \gamma\mapsto \frac{d}{d\epsilon}\tr(\xi\rho(\gamma)^{-1}\rho_\epsilon(\gamma))$ induces precisely the inverse of $\Psi$. 
The exact sequence of the proposition follows from standard facts of commutative algebra. 
\end{proof}

\subsection{Characters of 3-manifolds}\label{coho}
%Suppose $k$ is algebraically closed. 
Let $M$ be a connected 3-manifold whose boundary is non empty and has the following decomposition into connected components: $\partial M=\bigcup_{i\in I} \partial_i M$. Let $g_i$ be the genus of $\partial_i M$ : we suppose that $g_i>0$ for all $i\in I$ and set $d=\sum_i\max(1,3g_i-3)$.

Denote by $\Gamma$ the fundamental group of $M$ and let $Y$ be an irreducible component of $X(\Gamma)$. We denote by $\chi_Y:B(\Gamma)\to k[Y]$ the tautological character and by $\ad_Y$ the adjoint representation. The following technical lemma will be useful in the sequel.

\begin{lemma}\label{localisation}
Let $\alpha,\beta$ be two elements of $\Gamma$ such that $\Delta_{\alpha,\beta}$ is non zero in $k[Y]$. Then, there is a $k(Y)$-basis of $\ad_Y$ such that the adjoint representation has coefficients in $k[Y][\frac{1}{\Delta_{\alpha,\beta}}]$. 
\end{lemma}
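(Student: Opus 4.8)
The plan is to produce the required basis directly from the tautological representation $\rho_Y:\Gamma\to\sldeux(K)$ (where $K$ is the at-most-quadratic extension of $k(Y)$ carrying it) and to read the matrix of the adjoint action off trace computations, using the identity $\det M=-\Delta_{\alpha,\beta}^2$ from Lemma \ref{burnside} to control the denominators.

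First I would set $\gamma_1=1,\gamma_2=\alpha,\gamma_3=\beta,\gamma_4=\alpha\beta$ as in Lemma \ref{burnside} and form the symmetric matrix $M=\bigl(\chi_Y(Y_{\gamma_i\gamma_j})\bigr)_{i,j}$, whose entries lie in $k[Y]$. Since $\det M=-\Delta_{\alpha,\beta}^2$ becomes invertible in $R:=k[Y][\tfrac1{\Delta_{\alpha,\beta}}]$, Cramer's rule shows that $M^{-1}$ has entries in $R$. Over $K$, the non-vanishing of $\det M$ makes $(\rho_Y(\gamma_i))_{i=1}^4$ a $K$-basis of $M_2(K)$ by Lemma \ref{burnside}; subtracting suitable scalar multiples of $\id$ shows that $(\id,\rho_Y(\alpha)_0,\rho_Y(\beta)_0,\rho_Y(\alpha\beta)_0)$ is again a $K$-basis. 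Hence the three traceless elements $e_1=\rho_Y(\alpha)_0$, $e_2=\rho_Y(\beta)_0$, $e_3=\rho_Y(\alpha\beta)_0$, which by construction lie in $\ad_Y=M(\rho_Y)_0$, are $K$-linearly independent, a fortiori $k(Y)$-linearly independent, and therefore form a $k(Y)$-basis of the $3$-dimensional space $\ad_Y$. This is the basis I claim works.

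Next comes the main computation. For $\gamma\in\Gamma$ and $j\in\{1,2,3\}$ I would compute $\ad(\rho_Y(\gamma))(e_j)=\rho_Y(\mu)_0$ with $\mu=\gamma\gamma_{j+1}\gamma^{-1}$, using that conjugation preserves the trace. To expand $\rho_Y(\mu)=\sum_i c_i\rho_Y(\gamma_i)$ I would pair with each $\rho_Y(\gamma_j)$: since $\tr(\rho_Y(\mu)\rho_Y(\gamma_j))=\chi_Y(Y_{\mu\gamma_j})\in k[Y]$, the coefficient vector solves $Mc=\bigl(\chi_Y(Y_{\mu\gamma_j})\bigr)_j$, whence $c=M^{-1}\bigl(\chi_Y(Y_{\mu\gamma_j})\bigr)_j\in R^4$. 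Taking traceless parts, the $\id$-coefficient drops out (both $\rho_Y(\mu)_0$ and the $e_i$ are traceless), leaving $\rho_Y(\mu)_0=c_2e_1+c_3e_2+c_4e_3$ with $c_2,c_3,c_4\in R$. These scalars are exactly the entries of the matrix of the adjoint action in the basis $(e_1,e_2,e_3)$, so they lie in $k[Y][\tfrac1{\Delta_{\alpha,\beta}}]$, as desired.

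The point requiring care is to check that the coefficients really land in $k[Y][\tfrac1{\Delta_{\alpha,\beta}}]$ and not merely in $K[\tfrac1{\Delta_{\alpha,\beta}}]$: although $\rho_Y$ takes values in the possibly quadratic extension $K$, the quadratic ambiguity never enters, because both $M$ and the right-hand vectors $\bigl(\chi_Y(Y_{\mu\gamma_j})\bigr)_j$ have all entries in $k[Y]$ (they are values of $\chi_Y$ on the generators $Y_g$), so the only denominator introduced is $\det M=-\Delta_{\alpha,\beta}^2$. This is precisely the mechanism reflecting the fact that $\ad_Y$, unlike $\rho_Y$ itself, is already defined over $k(Y)$.
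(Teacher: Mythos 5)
Your proof is correct and follows essentially the same route as the paper: both arguments hinge on the Gram matrix $M$ having determinant $-\Delta_{\alpha,\beta}^2$, hence invertible over $k[Y][\frac{1}{\Delta_{\alpha,\beta}}]$, and on expanding $\rho_Y(\gamma)$ in the basis $(1,\rho_Y(\alpha),\rho_Y(\beta),\rho_Y(\alpha\beta))$ via $M^{-1}T_\gamma$ so that the coefficients land in the localization. The only difference is presentational: the paper re-invokes Saito's theorem over the localized ring and appeals to ``by construction'' for the coefficient containment, whereas you unpack that step explicitly with the tautological representation, which is a perfectly valid (and arguably more self-contained) way to write the same argument.
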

\begin{proof}
Set $R=k[Y][\Delta_{\alpha,\beta}^{-1}]$ and consider the composition $\chi_Y:B(\Gamma)\to k[Y]\to R$. One can find an extension $\hat{R}$ with $A,B\in \sldeux(\hat{R})$ and by Saito's theorem, a unique representation $\rho_{\alpha,\beta}:\Gamma\to \sldeux(\hat{R})$ with character $\chi_Y$ satisfying $\rho(\alpha)=A$ and $\rho(\beta)=B$. By construction, $\rho(\gamma)$ lies in $\spa_{R}\{1,A,B,AB\}$. The basis $A_0,B_0,(AB)_0$ of $M(\rho_{\alpha,\beta})_0$ satisfies the assumption of the lemma and one has $M(\rho)_0\simeq M(\rho_{\alpha,\beta})_0\simeq\ad_Y$. This proves the lemma.
\end{proof}

\subsubsection{Computing $H^1$}

\begin{definition}
Recall that we say that a representation $\rho:\Gamma\to \sldeux(k)$ belongs to $Y$ if its character $\chi_\rho:B(\Gamma)\to k$ factorizes through $k[Y]$. Equivalently, we will say that $Y$ contains the representation $\rho$. For instance, a representation is of irreducible type if it contains an irreducible character. 

A representation $\rho:\Gamma\to\sldeux(k)$ is said {\it regular} if $H^1(\Gamma,\ad_\rho)$ has dimension $d$. 
An irreducible component $Y$ of $X(\Gamma)$ will be said regular if it contains a regular representation. 
\end{definition}

\begin{proposition}\label{inegpoinc}
An irreducible component $Y$ of $X(\Gamma)$ of irreducible type is regular if and only if $\dim H^1(\Gamma,\ad_Y)=d$. 
\end{proposition}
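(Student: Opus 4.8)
The plan is to compare the single integer $\dim_{k(Y)} H^1(\Gamma,\ad_Y)$ with the function $\rho\mapsto \dim H^1(\Gamma,\ad_\rho)$ as $\rho$ ranges over the characters of $Y$, exploiting two independent facts. First, by upper semicontinuity the cohomology of the tautological (generic) representation computes the \emph{minimum} of this function over $Y$, so that $\dim_{k(Y)}H^1(\Gamma,\ad_Y)\le \dim H^1(\Gamma,\ad_\rho)$ for every $\rho$ belonging to $Y$, with equality on a Zariski-dense open subset. Second, there is a universal lower bound $\dim H^1(\Gamma,\ad_\rho)\ge d$ coming from Poincar\'e duality, valid for every representation attached to $Y$, in particular for the generic one. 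Granting these, the equivalence is formal: if $Y$ contains a regular $\rho$, then $\dim_{k(Y)}H^1(\Gamma,\ad_Y)\le d$ by the first fact and $\ge d$ by the second, hence $=d$; conversely, if $\dim_{k(Y)}H^1(\Gamma,\ad_Y)=d$ then this minimal value is attained on a dense open set of characters, each of irreducible type, which therefore consists of regular representations.

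For the semicontinuity I would first fix $\alpha,\beta\in\Gamma$ with $\Delta_{\alpha,\beta}$ invertible in $k[Y]$ (possible since $Y$ is of irreducible type) and set $R=k[Y][\Delta_{\alpha,\beta}^{-1}]$, so that by Lemma \ref{localisation} the adjoint representation is realized over $R$ as a homomorphism $\Gamma\to\mathrm{GL}_3(R)$. Since $M$ is a compact $3$-manifold, $\Gamma$ is finitely presented, and $H^*(\Gamma,\ad_Y)$ is computed by a bounded complex of finite free $R$-modules $C^0\to C^1\to C^2$ whose differentials (built from the Fox derivatives of the relators) have entries in $R$. For such a complex the fibre dimension $x\mapsto \dim_{\kappa(x)}H^1(C^\bullet\otimes\kappa(x))$ is upper semicontinuous on $\spec R$, because it equals $\rk C^1-\rk(d^1\otimes\kappa(x))-\rk(d^0\otimes\kappa(x))$ and each rank is lower semicontinuous. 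Its value at the generic point is exactly $\dim_{k(Y)}H^1(\Gamma,\ad_Y)$ and is the minimum, attained on the dense open locus where both ranks are maximal. (The tautological representation $\rho_Y$ of the previous subsection realizes the generic character over an at most quadratic extension of $k(Y)$, and cohomology is insensitive to this flat base change.)

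The arithmetic heart, and the step I expect to be the main obstacle, is the lower bound $\dim H^1(\Gamma,\ad_\rho)\ge d$. Here I would argue geometrically on $M$, using that the Killing form makes $\ad$ a self-dual local system and that $H^1(\Gamma,-)=H^1(M,-)$, $H^1(\Gamma_i,-)=H^1(\partial_iM,-)$ (the boundary surfaces being aspherical). By Poincar\'e--Lefschetz duality the cup-product pairing on $H^1(\partial M,\ad_\rho)$ is a non-degenerate alternating form, and the image of the restriction $H^1(M,\ad_\rho)\to H^1(\partial M,\ad_\rho)$ is a Lagrangian subspace (the ``half lives, half dies'' lemma); consequently $\dim H^1(M,\ad_\rho)\ge \tfrac12\dim H^1(\partial M,\ad_\rho)$. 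It remains to bound each boundary term. For a closed oriented surface $\partial_iM$ of genus $g_i$, duality on the surface gives $\dim H^2=\dim H^0$, and the Euler characteristic yields $\dim H^1(\partial_iM,\ad_\rho)=2\dim H^0(\partial_iM,\ad_\rho)+(6g_i-6)$. When $g_i\ge 2$ this is already $\ge 6g_i-6=2(3g_i-3)$, while for $g_i=1$ the abelian image always fixes a nonzero element of $\lie$, so $\dim H^0\ge 1$ and the term is $\ge 2$; in both cases $\dim H^1(\partial_iM,\ad_\rho)\ge 2\max(1,3g_i-3)$. Summing over $i$ and halving gives $\dim H^1(\Gamma,\ad_\rho)\ge \sum_i\max(1,3g_i-3)=d$. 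Applying the same computation to the generic representation over $k(Y)$, where Poincar\'e duality again holds with field coefficients, furnishes $\dim_{k(Y)}H^1(\Gamma,\ad_Y)\ge d$, which is the remaining inequality needed to close the argument.
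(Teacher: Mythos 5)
Your proof is correct and takes essentially the same route as the paper: the Poincar\'e--Lefschetz ``half lives, half dies'' argument combined with the Euler characteristic of the boundary surfaces yields the universal bound $\dim H^1(M,\ad_\rho)\ge d$, and upper semicontinuity for the complex of free modules over $k[Y][\Delta_{\alpha,\beta}^{-1}]$ from Lemma \ref{localisation} identifies the generic dimension with the minimum over the component. The formal deduction of the equivalence from these two facts is exactly the one in the paper.
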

\begin{proof}
We start with a standard argument of Poincar\'e duality, taken from \cite{hk}. Let $\rho:\Gamma\to\sldeux(k)$ be any representation and $k$ be any field. 
From Poincar\'e duality and using the trace to identify $\ad_\rho$ and $\ad_\rho^*$, we get for any integer $l$ an isomorphism of $k$-vector spaces:
$$PD:H^l(M,\ad_\rho)\simeq H^{3-l}(M,\partial M,\ad_{\rho})^*$$
The exact sequence of the pair $(M,\partial M)$ and Poincar\'e duality together give the following commutative diagram:
$$\xymatrix{
H^1(M,\ad_\rho)\ar[r]^\alpha\ar[d]^{PD} & H^1(\partial M,\ad_\rho) \ar[r]^\beta\ar[d]^{PD} & H^2(M,\partial M,\ad_\rho)\ar[d]^{PD} \\
H^2(M,\partial M,\ad_\rho)^*\ar[r]^{\beta^*} & H^1(\partial M,\ad_\rho)^* \ar[r]^{\alpha^*} & H^1(M,\ad_\rho)^*
}$$
This gives $\rk(\alpha)=\rk(\beta)=\dim\ker\beta$, $\dim\ker\beta+\rk\beta=\dim H^1(\partial M,\ad_\rho)$ and finally $\rk(\alpha)=\frac 12  \dim H^1(\partial M,\ad_\rho)$. Hence, $\dim H^1(M,\ad_\rho)\ge \frac{1}{2} \dim H^1(\partial M, \ad_\rho)$. 

Moreover, $\chi(H^*(\partial_i M,\ad_\rho))=3\chi(\partial_iM)=6-6g_i$ and $H^0(\partial_i M,\ad_\rho)$ and $H^2(\partial_i M,\ad_\rho)$ have the same dimension. Hence, $\dim H^1(M,\ad_\rho)\ge \sum_i (\dim H^0(\partial_i M)+3g_i-3)$. 
If $g_i=1$, $\rho$ restricted to $\partial_i M$ is abelian and hence, $\dim H^0(\partial_i M,\ad_\rho)\ge 1$. This proves the following inequality whatever be $\rho$:

\begin{equation}\label{poincare}
\dim H^1(M,\ad_\rho)\ge d.
\end{equation}

Let $\rho:\Gamma\to\sldeux(k)$ be a regular representation and $\alpha,\beta\in\Gamma$ be such that $\chi_\rho(\Delta_{\alpha,\beta})\ne 0$. Then, by Lemma \ref{localisation}, one can find a $k(Y)$-basis of $\ad_Y$ such that the adjoint representation has coefficients in $R=k[Y][\frac{1}{\Delta_{\alpha,\beta}}]$. Let us call $\ad_Y^R$ the free $R$-module such that $\ad_Y=\ad_Y^R\otimes_R k(Y)$. 

Then $C^*(M,\ad_Y^R)$ is a finite complex of free $R$-modules. By standard semi-continuity arguments (See \cite{hartshorne}, Chap. 12), the function $p_i(x)=\dim_{k(x)} H^i(M,\ad^R_Y\otimes k(x))$ is upper semi-continuous on $\spec(R)$ for any $i\in \N$. Comparing the character $\chi_\rho$ with the generic point, we get the inequality $\dim_{k(Y)}H^1(M,\ad_Y)\le \dim_k H^1(M,\ad_\rho)$. 

It follows that if $Y$ contains a regular representation, then $H^1(M,\ad_\rho)$ has dimension $d$. Reciprocally, if $H^1(M,\ad_Y)$ has dimension $d$, there is an open set in $\spec(R)$, and hence some closed points $\chi_\rho$ for which $H^1(M,\ad_\rho)$ has dimension less than $d$ by upper semi-continuity, hence equal to $d$ by the inequality \eqref{poincare}.
\end{proof}

\begin{corollary}
Let $Y$ be an irreducible component of $X(\Gamma)$ which contains a regular representation and such that $\mathfrak{p}/\mathfrak{p}^2\otimes k(Y)=0$ where $\mathfrak{p}$ is the minimal prime ideal of $B(\Gamma)$ associated to $Y$. Then $Y$ has dimension $d$. 
\end{corollary}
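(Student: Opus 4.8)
The plan is to read off $\dim Y$ from the module of Kähler differentials $\Omega^1_{k(Y)/k}$, using Proposition \ref{differ} to relate it to $H_1(\Gamma,\ad_Y)$ and Proposition \ref{inegpoinc} to pin down the dimension of the latter. First, since $Y$ contains a regular representation it is a regular component (of irreducible type, so that the propositions of this subsection apply), and Proposition \ref{inegpoinc} yields $\dim_{k(Y)} H^1(\Gamma,\ad_Y)=d$.

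Next I would pass from cohomology to homology. Because $\ad_Y$ is self-dual via the Killing form --- an invariant non-degenerate pairing on $M(\rho)_0$ --- the universal coefficient theorem over the field $k(Y)$ identifies $H_1(\Gamma,\ad_Y)$ with the $k(Y)$-linear dual of $H^1(\Gamma,\ad_Y)$; hence $\dim_{k(Y)} H_1(\Gamma,\ad_Y)=d$. I would then feed the hypothesis $\mathfrak{p}/\mathfrak{p}^2\otimes_{k[Y]} k(Y)=0$ into the exact sequence of Proposition \ref{differ},
\[ \mathfrak{p}/\mathfrak{p}^2\otimes_{k[Y]}k(Y)\to H_1(\Gamma,\ad_Y)\to \Omega^1_{k(Y)/k}\to 0,\]
whose leftmost term now vanishes, so that the remaining arrow is an isomorphism $H_1(\Gamma,\ad_Y)\simeq \Omega^1_{k(Y)/k}$. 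Therefore $\dim_{k(Y)}\Omega^1_{k(Y)/k}=d$.

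To finish, I would invoke the standard fact that, $k$ being of characteristic $0$ and $k(Y)$ a finitely generated field extension of $k$, the module $\Omega^1_{k(Y)/k}$ is free over $k(Y)$ of rank equal to the transcendence degree, i.e. $\dim_{k(Y)}\Omega^1_{k(Y)/k}=\operatorname{trdeg}_k k(Y)=\dim Y$. Combining this with the previous equality gives $\dim Y=d$.

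The step I expect to require the most care is the duality $\dim_{k(Y)} H_1(\Gamma,\ad_Y)=\dim_{k(Y)} H^1(\Gamma,\ad_Y)$: one must check that group homology and cohomology with coefficients in $\ad_Y$ are genuinely dual, which uses both the self-duality of the coefficient module and the fact that we work over a field (so that the correction terms in universal coefficients vanish). Everything else is formal once Propositions \ref{differ} and \ref{inegpoinc} and the characteristic-zero description of $\Omega^1$ are granted.
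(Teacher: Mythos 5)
Your proof is correct and follows the same route as the paper: Proposition \ref{inegpoinc} gives $\dim H^1(\Gamma,\ad_Y)=d$, the vanishing of $\mathfrak{p}/\mathfrak{p}^2\otimes k(Y)$ turns the exact sequence of Proposition \ref{differ} into an isomorphism $H_1(\Gamma,\ad_Y)\simeq\Omega^1_{k(Y)/k}$, and the characteristic-zero identity $\dim_{k(Y)}\Omega^1_{k(Y)/k}=\operatorname{trdeg}_k k(Y)=\dim Y$ concludes. The only difference is that you make explicit the duality $\dim H_1(\Gamma,\ad_Y)=\dim H^1(\Gamma,\ad_Y)$ via the Killing-form self-duality of $\ad_Y$, a step the paper's one-line proof passes over silently; your justification of it is correct.
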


\begin{proof}
From Proposition \ref{differ} and Proposition \ref{inegpoinc}, we get the sequence of inequalities $d=\dim H^1(M,\ad_Y)=\dim \Omega^1_{k(Y)/k}={\rm tr.deg}_{k}k(Y)$. \end{proof}

If the assumptions of the corollary are verified, we will say that $Y$ is a regular component of $X(\Gamma)$. If $M$ is a hyperbolic manifold with finite volume and $\rho:\pi_1(M)\to \sldeux(k)$ is a lift of the holonomy representation, then the component of $X(\Gamma)$ containing $\rho$ is regular as $\rho$ is regular and  $\dim Y=d$. We suppose from now on that $Y$ is a regular component of $X(\Gamma)$. 

\subsubsection{Computing $H^2$}

%In the same way, we compute the other two cohomology groups of $M$ with coefficients in $\ad_Y$ supposing that $Y$ is a regular component.
%From the equality in \eqref{poincare}, we get $\dim H^0(\partial_i M,\ad_Y)=0$ if $g_i>1$ and $1$ if $g_i=1$. In the case when $g_i=1$, we would like to find a preferred generator. 

%Let $I_0\subset I$ be the set of toric boundary components of $M$, we will say that a generator $\xi_i\in H^0(\partial_i M,\ad_Y)$ is normalized if $\tr(\xi_i^2)=2\in H^0(\partial_i M,k)$. Such a generator exists if and only if $\frac{1}{2}\tr(\xi_i^2)\in k(Y)^{\times 2}$ for any (or all) generators $\xi_i$ of $H^0(\partial_i M,\ad_Y)$. We will consider an extension $K$ of $k(Y)$ in which a collection $(\xi_i)_{i\in I_0}$ is given. This extension has degree at most $2^{|I_0|}$. 

%A more geometric point of view is the following: fix a base point $x$ in $M$ and choose arcs joining the base points of $\partial_i M$ to $x$. In that way, we get maps $\pi_1(\partial_i M)\to \Gamma$. We call extended representation a pair $(\rho,(z_i))\in \hm(\Gamma,\sldeux(k))\times\mathbb{P}^1(k)^{I_0}$ such that $\rho(\gamma_i)z_i=z_i$ for all $\gamma_i\in \pi_1(\partial_i M)$. The algebraic quotient of this space is an algebraic variety say $X^+(\Gamma)$ and the map forgetting the $z_i$'s is an algebraic map $X^+(\Gamma)\to X(\Gamma)$ which is (generically) a $2^{|I_0|}$-ramified covering. The field $K$ is the field of fractions of an irreducible component $Y^+$ of $X^+(\Gamma)$ mapping to $Y$. 
Let $I_0\subset I$ be the subset parametrizing the toric components of $\partial M$ and let $Y$ be a regular component of $X(\Gamma)$. Then, from the equality in \eqref{poincare}, we get the isomorphisms $H^0(\partial_i M,\ad_Y)=0$ if $g_i>1$ and $H^0(\partial_i M,\ad_Y)$ is one dimensional if $g_i=1$. Pick $\xi_i$ a generator of $H^0(\partial_i M)$ for $i\in I_0$.
\begin{lemma}
The map $H^2(M,\ad_Y)\to K^{I_0}$ mapping $\eta$ to the family $(\langle r_i^*\eta,\xi_i\rangle)_{i\in I_0}$ is an isomorphism, where $r_i:\partial_i M\to M$ is the inclusion map. 
\end{lemma}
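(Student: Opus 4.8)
The plan is to recognize that, after the Poincar\'e duality identifications on the boundary, the displayed map is nothing but the restriction map $r^*\colon H^2(M,\ad_Y)\to H^2(\partial M,\ad_Y)$, and then to prove that this restriction is an isomorphism using the long exact sequence of the pair $(M,\partial M)$ together with the rank computations already carried out in the proof of Proposition \ref{inegpoinc}. Throughout, $K=k(Y)$ is the coefficient field and all cohomology groups are finite-dimensional $K$-vector spaces.

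First I would handle the boundary identifications. For $i\in I$ with $g_i>1$, regularity gives $H^0(\partial_iM,\ad_Y)=0$, hence $H^2(\partial_iM,\ad_Y)=0$ by Poincar\'e duality on the closed oriented surface $\partial_iM$, where the Killing form identifies $\ad_Y$ with its dual. Thus only the toric components contribute, and $H^2(\partial M,\ad_Y)=\bigoplus_{i\in I_0}H^2(\partial_iM,\ad_Y)$. On each torus $\partial_iM$ with $i\in I_0$, Poincar\'e duality yields a perfect pairing $H^2(\partial_iM,\ad_Y)\times H^0(\partial_iM,\ad_Y)\to K$ (the cup product followed by the Killing form and the fundamental class); since $H^0(\partial_iM,\ad_Y)$ is one-dimensional with generator $\xi_i$, pairing with $\xi_i$ is an isomorphism $H^2(\partial_iM,\ad_Y)\xrightarrow{\ \sim\ }K$. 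Hence the map of the lemma is exactly $r^*$ composed with these isomorphisms, and it suffices to show that $r^*\colon H^2(M,\ad_Y)\to H^2(\partial M,\ad_Y)$ is an isomorphism.

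Next I would exploit the long exact sequence
\[
H^2(M,\partial M,\ad_Y)\xrightarrow{\ j^*\ }H^2(M,\ad_Y)\xrightarrow{\ r^*\ }H^2(\partial M,\ad_Y)\xrightarrow{\ \delta\ }H^3(M,\partial M,\ad_Y).
\]
For surjectivity of $r^*$, I note that Poincar\'e--Lefschetz duality gives $H^3(M,\partial M,\ad_Y)\simeq H^0(M,\ad_Y)^*$, which vanishes because $Y$ is of irreducible type, so $\ad_Y^\Gamma=0$; thus $\delta=0$ and $r^*$ is onto. For injectivity I must show $j^*=0$, equivalently that the connecting map $\beta\colon H^1(\partial M,\ad_Y)\to H^2(M,\partial M,\ad_Y)$ appearing in the diagram of Proposition \ref{inegpoinc} is surjective. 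There it is shown that $\rk\beta=\tfrac12\dim H^1(\partial M,\ad_Y)$, and the Euler characteristic computation on the boundary surfaces gives $\tfrac12\dim H^1(\partial M,\ad_Y)=\sum_i\bigl(\dim H^0(\partial_iM,\ad_Y)+3g_i-3\bigr)=d$ once the regular-component values $\dim H^0(\partial_iM,\ad_Y)=0$ for $g_i>1$ and $=1$ for $g_i=1$ are inserted. On the other hand $\dim H^2(M,\partial M,\ad_Y)=\dim H^1(M,\ad_Y)=d$ by Poincar\'e--Lefschetz duality and regularity, so $\rk\beta=\dim H^2(M,\partial M,\ad_Y)$ forces $\beta$ surjective. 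By exactness this yields $j^*=0$, hence $\ker r^*=0$, and $r^*$ is an isomorphism. Composing with the torus identifications of the previous paragraph finishes the proof.

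The only genuinely delicate point is the bookkeeping in the boundary step: one must check that the explicit pairing $\langle r_i^*\eta,\xi_i\rangle$ coincides with the abstract Poincar\'e duality pairing on $\partial_iM$ and is nondegenerate, which rests on the nondegeneracy of the Killing form on $\ad_Y$ and on $\xi_i$ generating the one-dimensional space $H^0(\partial_iM,\ad_Y)$. Everything else is a formal consequence of the long exact sequence, the duality isomorphisms, the vanishing $\ad_Y^\Gamma=0$, and the rank identities already established for regular components.
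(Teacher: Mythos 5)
Your proof is correct and follows essentially the same route as the paper: identify the map with $r^*$ via Poincar\'e duality on the boundary, get surjectivity from $H^3(M,\partial M;\ad_Y)\simeq H^0(M,\ad_Y)^*=0$, and then close the argument with the rank identities from Proposition \ref{inegpoinc}. The only (harmless) difference is at the injectivity step, where the paper counts $\dim H^2(M,\ad_Y)=|I_0|$ by an Euler characteristic computation while you instead show the preceding map $j^*$ vanishes by proving $\beta$ surjective; the two are equivalent given the exact sequence and the same dimension facts.
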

\begin{proof}
This map is part of the exact sequence of the pair $(M,\partial M)$. Poincar\'e duality gives $H^2(\partial M,\ad_Y)=\bigoplus_i H^0(\partial_i M,\ad_Y)^*=K^{I_0}$ where a basis is given by evaluation on $\xi_i$. The map is surjective as the group $H^3(M,\partial M;\ad_Y)=H^0(M,\ad_Y)^*=0$ as $\ad_Y$ is an irreducible representation. From a computation of Euler characteristic, we get $\dim H^2(M,\ad_Y)=|I_0|$ and the result follows.
\end{proof}

\subsection{Reidemeister Torsion}
For a vector space $E$ of dimension $n$ over $k$, we denote by $\det(E)$ the space $\Lambda^nE$ and if $n=1$, we set $E^{-1}=E^*$. 
Given a finite complex of finite dimensional vector spaces $C^*=C^0\to\cdots C^k$, we set $\det C^*=\bigotimes_{i=0}^k (\det C^i)^{(-1)^i}$. The cohomology of $C^*$ is viewed as a complex with trivial differentials. There is a natural (Euler) isomorphism $\det C^*\simeq \det H^*$ between the determinant of a complex and the determinant of its cohomology which is well-defined up to sign. 

Picking a cellular decomposition of $M$ with $0,1$ and $2$-cells, we obtain a complex $C^*(M,\ad_Y)$ with a preferred volume element: we associate to each cell the volume element in $\Lambda^3 \ad_Y$ dual to the trilinear form $\epsilon$.

Through the isomorphism $\det C^*(M,\ad_Y)\simeq \det H^*(M,\ad_Y)$, we get an element $T(M)\in \det H^*(M,\ad_Y)=\det \Omega^1_{k(Y)/k}\otimes \det H^2(M,\ad_Y)$. Given a system of generators $\xi=(\xi_i)_{i\in I_0}$ of $H^0(\partial M,\ad_Y)$, we define 
\[T(M,\xi)=\langle T(M),\bigotimes_{i\in I_0} \xi_i\rangle\in \Omega^d_{k(Y)/k}.\]

In practice, we will evaluate it on preferred generators $\xi_i\in H^0(\partial_i M,\ad_Y)$ such that $\tr(\xi_i^2)=2$ or on elements of the form $\xi_i=\rho(\gamma_i)_0$ for some $\gamma_i\in \pi_1(\partial_i M)$.

\subsubsection{The handlebody} 

Let $M$ be handlebody of genus 2. Let $\alpha,\beta$ be the generators of $F_2=\pi_1(M)$. One has $B(\Gamma)=k[x,y,z]$ with $x=Y_\alpha,y=Y_\beta$ and $z=Y_\gamma$. Moreover a tautological representation is given by 
\[\rho(\alpha)=\begin{pmatrix} x & -1 \\ 1 & 0\end{pmatrix}, \quad \rho(\beta)=\begin{pmatrix} 0 & -u^{-1} \\ u & y\end{pmatrix}\]
with $K=k(x,y,z,u)/(u^2+uz+1)$. 
The handlebody collapses to a cellular complex with one 0-cell and two 1-cells and the twisted complex is given by $C^0=\lie(K), C^1=\lie(K)^2$ and 

\[d\xi= (\rho(\alpha)^{-1}\xi\rho(\alpha)-\xi, \rho(\beta)^{-1}\xi\rho(\beta)-\xi)\]
Let us write $T(M)=fdx\wedge dy\wedge dz$. Then, in order to compute $f$, it suffices to evaluate it on the vector fields $\partial_x,\partial_y,\partial_z$ which correspond to the twisted cocycles $\psi_x=\rho^{-1}\partial_x\rho,\psi_y=\rho^{-1}\partial_y\rho,\psi_z=\rho^{-1}\partial_z\rho$. 

Finally, $f$ is the determinant of the matrix of $d$ and the three cocycles in a basis of $\lie(K)$ of volume 1. This gives $f=2$ and $T(M)=2 dx\wedge dy\wedge dz$. 

\subsubsection{The magic manifold}
Let $p:S^3\to S^2$ be the Hopf fibration and $\Sigma\subset S^2$ be the complement of three disjoint discs. Then $M=p^{-1}(\Sigma)$ is the complement of a link in $S^3$ with 3 components called the magic manifold. Clearly, the Hopf bundle restricted to $\Sigma$ is trivial and we have indeed $M\simeq \Sigma\times S^1$.

In particular, $\pi_1(M)=F_2\times \Z$. If $\rho:\Gamma\to \sldeux(k)$ is irreducible, it has to map the generator of $\Z$ to $\pm 1$. This shows that $X(\Gamma)$ has two irreducible components $Y_+$ and $Y_-$ of irreducible type, each isomorphic to $X(F_2)=\mathbb{A}^3$. 
Let $\alpha,\beta$ be the generators of $F_2$ and $t$ the generator of $\Z$. The tautological representation $\rho:\Gamma\to \sldeux(K)$ corresponding to $Y_{\pm}$ is given by the same formula as above with in addition $\rho(t)=\pm \text{Id}$. 

The manifold $M$ collapses to the product of a wedge of two circles with a circle. This cell complex has respectively 1,3,2 cells of dimension 0,1,2.
We have 
\[d^0\xi=(\rho(\alpha)^{-1}\xi\rho(\alpha)-\xi, \rho(\beta)^{-1}\xi\rho(\beta)-\xi,0)\text{ and }\]

\[d^1(\zeta,\eta,\theta)=(\rho(\alpha)^{-1}\theta\rho(\alpha)-\theta,\rho(\beta)^{-1}\theta\rho(\beta)-\theta)\]

Without surprise, this complex is the tensor product of the complex of the handlebody with the cellular complex of the circle. In particular, it splits into two independent complexes. Its torsion will be $\frac{2}{g}dx\wedge dy\wedge dz$ where $g$ is the torsion of the acyclic complex

$$\xymatrix{
0\ar[r]&\sldeux(K)\ar[r]^{d}& \sldeux(K)^2\ar[rrr]^{\langle \cdot,\xi_\alpha\otimes\xi_\beta\otimes \xi_{\alpha\beta}\rangle}&&&K^3\ar[r]&0
}$$
Using $\xi_\alpha=\rho(\alpha)_0, \xi_\beta=\rho(\beta)_0$ and $\xi_{\alpha\beta}=\rho(\alpha\beta)_0$, we get the formula $g=4$ and hence $T(M,\xi)=\frac{1}{2}dx\wedge dy\wedge dz$. In the normalization $\tr\xi_i^2=2$, we get 
\[T(M)=\frac{dx\wedge dy\wedge dz}{2\sqrt{(x^2-4)(y^2-4)(z^2-4)}}=\frac{1}{2}\frac{du}{u}\wedge\frac{dv}{v}\wedge\frac{dw}{w}\]
 where we have set $x=u+u^{-1},y=v+v^{-1}$ and $z=w+w^{-1}$. 

\subsubsection{Manifolds fibering over the circle}
The following example is an adaptation of \cite{dub}. Let $\Sigma$ be compact oriented surface with boundary and $\phi:\Sigma\to\Sigma$ be a homeomorphism preserving the orientation and fixing the boundary pointwise. 
The suspension $M$ is defined as $M=\Sigma\times [0,1]/\!\!\sim$ where $(x,1)\sim (\phi(x),0)$. Its boundary is $\partial\Sigma\times S^1$ and its fundamental group is 
\[\pi_1(M)=\pi_1(\Sigma)\rtimes \Z\]
where the action of $\Z$ is given by the action $\phi$ on $\pi_1(M)$. More precisely, picking an element $t\in\pi_1(M)$ mapping to 1, we have $t\gamma t^{-1}=\phi_*(\gamma)$ where $\phi_*:\pi_1(\Sigma)\to\pi_1(\Sigma)$ is the map induced by $\phi$. 
 
%Let $Y$ be an irreducible component of irreducible type of $\pi_1(M)$ and $\rho_Y:\pi_1(M)\to \sldeux(K)$ be a tautological representation. 
%Let $\rho:\pi_1(M)\to \sldeux(k)$ be an irreducible representation. By the inclusion $[\pi_1(M),\pi_1(M)]\subset \pi_1(\Sigma)$, the restriction of $\rho$ to $\pi_1(\Sigma)$ is again irreducible. Reciprocally, if two representations $\rho,\rho':\pi_1(M)\to \sldeux(k)$ restrict to the same irreducible representation of $\Sigma$, then $\rho(t)\rho'(t)^{-1}$ commutes with this restriction hence $\rho'(t)=\pm \rho(t)$. 
Recall from Subsection \ref{semi} that the map $r:X(M)\to X(\Sigma)$ corestricted to $X^{\irr}(\Sigma)$ is a $\{\pm 1\}$-principal covering on its image, the (irreducible) fixed point set of $\phi^*$ acting on $X^{\irr}(\Sigma)$. 
We choose an irreducible component $Y$ of $X(M)$ of irreducible type: it maps to an irreducible component $Z=r(Y)$ of $X(\Sigma)$ of irreducible type.
Moreover, the map $Y\to Z$ is a covering of order at most 2. The adjoint representation $\ad_Y$ is clearly independent on the representation of $\pi_1(M)$ chosen, hence the Reidemeister torsion actually lives on $Z$. 

Consider the long exact sequence of the pair $(M,\Sigma)$ (Wang sequence), together with the corresponding sequence on the boundary. It gives the following diagram (we removed the coefficients $\ad_Y$ from the notation).

\[\xymatrix{
0\ar[r]\ar[d]& H^1(M)\ar[r]\ar@{(->}[d] & H^1(\Sigma)\ar[r]^\alpha\ar[d]& H^1(\Sigma) \ar[r]\ar[d]& H^2(M)\ar[r]\ar[d]^\sim& 0\\
H^0(\partial \Sigma)\ar[r]&H^1(\partial M)\ar[r]& H^1(\partial\Sigma)\ar[r]& H^1(\partial\Sigma)\ar[r]^\sim& H^2(\partial M) \ar[r]& 0
}\] 
Multiplicativity properties imply that the torsion of the first line is the element $T(M)\in \det\Omega^1_{k(Y)/k}\otimes \det H^2(M)$. Choosing generators $\xi$ of $H^0(\partial M)$, gives the preferred generator of $\det H^2(M)$ and hence the torsion $T(M,\xi)$ as a volume form on $Z$. 

It remains to interpret the map $\alpha=\phi^*-\text{id}$, which we do in the following lemma, proved in the same way as Proposition \ref{differ}.
\begin{lemma}
The natural map $\Omega^1_{B(\Sigma)/k}\otimes_{k[Z]} k(Y)\to H_1(\Sigma,\ad_Y)$ is an isomorphism. In geometric terms, $H_1(\Sigma,\ad_Y)$ is the space of rational sections of the cotangent bundle of $X(\Sigma)$, pulled-back to $Y$. 
\end{lemma}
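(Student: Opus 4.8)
The plan is to mimic the proof of Proposition \ref{differ} in this relative setting, constructing the isomorphism explicitly via first-order deformations and Saito's theorem. The claim is that $\Omega^1_{B(\Sigma)/k}\otimes_{k[Z]}k(Y)\simeq H_1(\Sigma,\ad_Y)$, where $\Sigma$ is the surface with a tautological representation $\rho_Y$ valued in $\sldeux(\hat K)$ for an extension $\hat K$ of $k(Y)$, and where we regard $\ad_Y$ as the $\Gamma_\Sigma$-module $M(\rho)_0$ defined over $k(Y)$ itself (so that $\ad_Y$ is $k(Y)$-rational even though $\rho_Y$ needs a quadratic extension). First I would define the forward map $\Psi$ by the same formula as in Proposition \ref{differ}: send the generator $dY_\gamma$ of the module of K\"ahler differentials to $\rho_Y(\gamma)_0\otimes[\gamma]\in C_1(\Sigma,\ad_Y)$, and check using the boundary formulas $\partial(\xi\otimes[\gamma])=\rho(\gamma)^{-1}\xi\rho(\gamma)-\xi$ and $\partial(\xi\otimes[\gamma,\delta])=\rho(\gamma)^{-1}\xi\rho(\gamma)\otimes[\delta]-\xi\otimes[\gamma\delta]+\xi\otimes[\gamma]$ that this descends through the Leibniz relations defining $\Omega^1_{B(\Sigma)/k}$ to a well-defined $B(\Sigma)$-module map landing in homology.

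Next I would construct the inverse using the dual-numbers trick. Set $\Lambda=k(Y)\oplus\epsilon\bigl(\Omega^1_{B(\Sigma)/k}\otimes k(Y)\bigr)$ with $\epsilon^2=0$ and let $\phi:B(\Sigma)\to\Lambda$ be the algebra map $Y_\gamma\mapsto Y_\gamma+\epsilon\,dY_\gamma$. Since $Z$ is of irreducible type, $\chi_Y$ is an irreducible character; choosing $\alpha,\beta\in\pi_1(\Sigma)$ with $\Delta_{\alpha,\beta}$ invertible, Lemma \ref{submersion} lets me lift the prescribed traces to matrices $A_\epsilon,B_\epsilon\in\sldeux(\Lambda)$, and Saito's Theorem \ref{saito} produces a unique deformation $\rho_\epsilon:\pi_1(\Sigma)\to\sldeux(\Lambda)$ with $\chi_{\rho_\epsilon}=\phi$. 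The assignment $\xi\otimes\gamma\mapsto\frac{d}{d\epsilon}\tr\bigl(\xi\,\rho(\gamma)^{-1}\rho_\epsilon(\gamma)\bigr)$ then defines a pairing that I would verify is the inverse of $\Psi$, exactly as in Proposition \ref{differ}. The second sentence of the lemma is then a reformulation: by Proposition \ref{tangent} (dualized), $\Omega^1_{B(\Sigma)/k}\otimes k(Y)$ is the fiber over $Y$ of the cotangent sheaf of $X(\Sigma)$, so $H_1(\Sigma,\ad_Y)$ is identified with rational sections of $T^*X(\Sigma)$ pulled back along $Y\to Z\hookrightarrow X(\Sigma)$.

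The main subtlety, and the step I expect to require the most care, is the passage from $\pi_1(M)=\pi_1(\Sigma)\rtimes\Z$ back to $\pi_1(\Sigma)$ and the corresponding descent of the coefficient module from $k(Y)$ to $k(Z)$. Because $Y\to Z$ is a covering of order at most two and $\rho_Y$ restricts to a representation of $\pi_1(\Sigma)$ whose adjoint module $\ad_Y$ is genuinely pulled back from $Z$, I must confirm that the base change $\Omega^1_{B(\Sigma)/k}\otimes_{k[Z]}k(Y)$ is the correct object — that is, that the differentials are computed over $B(\Sigma)$ but the module is tensored up to $k(Y)$ rather than $k(Z)$, matching the field over which the homology $H_1(\Sigma,\ad_Y)$ is naturally defined. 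The homological-algebra bookkeeping (that $\Psi$ respects this base change and that the Saito lift is compatible with restriction from $\pi_1(M)$ to $\pi_1(\Sigma)$) is routine but must be tracked precisely, which is why the proof can legitimately be abbreviated to ``proved in the same way as Proposition \ref{differ}.''
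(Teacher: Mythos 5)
Your proposal is correct and follows exactly the route the paper intends: the paper's own ``proof'' is literally the remark that the lemma is proved in the same way as Proposition \ref{differ}, i.e.\ by the forward map $dY_\gamma\mapsto\rho(\gamma)_0\otimes[\gamma]$ and the inverse built from the dual-numbers deformation via Lemma \ref{submersion} and Saito's theorem. Your additional care about the base change from $k[Z]$ to $k(Y)$ and the restriction from $\pi_1(M)$ to $\pi_1(\Sigma)$ is exactly the bookkeeping the paper leaves implicit.
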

In particular, the map $\phi^*$ is simply the derivative of the action of $\phi$ on $X(\Sigma)$. Restricted to $Z$, it is an endomorphism of the restriction of the cotangent space of $X(\Sigma)$ to $Z$. 

For any component $\gamma_i$ of $\partial \Sigma$, $i\in \pi_0(\partial\Sigma)$, we consider the generator $\xi_i=\rho_Y(\gamma_i)_0$. Plugging it into the sequence, we have the following interpretation of $T(M,\xi)$ and its normalized version

\begin{proposition}
On any irreducible component of the fixed point set of $\phi^*$ on $X(\Sigma)$ we have 
\[T(M,\xi)=\frac{1}{2}\frac{\bigwedge dY_{\gamma_i}}{\det(D\phi-1)|_{\ker dY_{\gamma_i}}}\text{ and }T(M)=\frac{1}{2}\frac{\bigwedge u_i^{-1}du_i}{\det(D\phi-1)|_{\ker du_i}}\]
where $u_i+u_i^{-1}=Y_{\gamma_i}$.
\end{proposition}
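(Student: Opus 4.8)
The plan is to assemble the torsion from the commutative diagram immediately preceding the statement, using the multiplicativity of Reidemeister torsion over the Wang exact sequence together with the homological interpretation of the map $\alpha=\phi^*-\id$ given in the lemma. First I would recall that, because $M$ is the suspension of $\phi$, its cellular complex is (up to chain homotopy) the mapping cone of $\phi_*-\id$ acting on the complex of $\Sigma$; the top line of the diagram is exactly the long exact cohomology sequence of this cone. The multiplicativity property of the torsion of a short exact sequence of complexes then identifies $T(M)\in\det\Omega^1_{k(Y)/k}\otimes\det H^2(M,\ad_Y)$ with the torsion of the acyclic-up-to-cohomology complex built from two copies of $C^*(\Sigma,\ad_Y)$ glued by $\phi^*-\id$. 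The key computational input is the lemma: under the isomorphism $\Omega^1_{B(\Sigma)/k}\otimes k(Y)\simeq H_1(\Sigma,\ad_Y)$, the connecting map $\alpha$ becomes the derivative $D\phi-1$ acting on the (restriction to $Z$ of the) cotangent space of $X(\Sigma)$.

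Next I would set up the bookkeeping of the boundary. The right-hand vertical map in the diagram is the isomorphism $H^2(M,\ad_Y)\xrightarrow{\sim}H^2(\partial M,\ad_Y)$, and choosing the generators $\xi=(\xi_i)_{i\in I_0}$ with $\xi_i=\rho_Y(\gamma_i)_0$ pins down a preferred generator of $\det H^2(M,\ad_Y)$, so that pairing $T(M)$ against $\bigotimes\xi_i$ yields the scalar volume form $T(M,\xi)\in\Omega^d_{k(Y)/k}$. The map $\phi^*-\id$ is not invertible on all of $H^1(\Sigma,\ad_Y)$—its cokernel feeds $H^2(M)$ and its kernel contains the contribution of $H^1(M)$—so the relevant determinant is that of $D\phi-1$ restricted to the complement of the boundary classes, i.e. to $\ker dY_{\gamma_i}$ (equivalently $\ker du_i$ in the normalized picture, since $u_i+u_i^{-1}=Y_{\gamma_i}$). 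This is where the restriction $\det(D\phi-1)|_{\ker dY_{\gamma_i}}$ in the statement comes from: the boundary one-forms $dY_{\gamma_i}$ are precisely the directions along which $\phi$ acts trivially (the boundary is fixed pointwise), and they are split off by the pairing with $\xi$.

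Then I would track the overall scalar prefactor $\tfrac12$. This is the contribution of the fiber circle $S^1$ in the suspension, exactly as in the magic-manifold computation earlier in the paper, where the cellular complex of the circle contributes an acyclic factor whose torsion produces the same constant; the preferred volume element in $\Lambda^3\ad_Y$ dual to $\epsilon$ is what converts this into the factor $\tfrac12$ rather than some other normalization. Concretely, I would compute the torsion of the short acyclic pieces coming from the $0$- and $2$-cells of the fiber (involving $d^0$ and the pairing $\langle\cdot,\xi_\alpha\otimes\xi_\beta\otimes\xi_{\alpha\beta}\rangle$ type maps), as was done explicitly for the magic manifold, and check it yields $\tfrac12$. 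The normalized version $T(M)$ then follows by rescaling each $\xi_i$ to satisfy $\tr(\xi_i^2)=2$, which replaces $\bigwedge dY_{\gamma_i}$ by $\bigwedge u_i^{-1}du_i$ via the change of variables $Y_{\gamma_i}=u_i+u_i^{-1}$, together with the Jacobian $dY_{\gamma_i}=(1-u_i^{-2})\,du_i$ and the normalization factor on the generator of $H^0(\partial_i M,\ad_Y)$.

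The main obstacle I anticipate is making the multiplicativity-of-torsion argument rigorous at the level of the diagram: one must verify that the torsion of the mapping cone decomposes according to the Wang sequence with the correct sign and the correct placement of the determinant lines, and in particular that the connecting isomorphisms ($H^1(\Sigma)\xrightarrow{\alpha}H^1(\Sigma)$ modulo the boundary contributions, and $H^1(\Sigma)\xrightarrow{\sim}H^2(M)$) are compatible with the chosen bases. The identification of $\alpha$ with $D\phi-1$ via the previous lemma handles the algebra, but confirming that exactly the classes $dY_{\gamma_i}$ are the ones removed by the boundary pairing—so that the determinant is genuinely a restriction to $\ker dY_{\gamma_i}$—requires carefully chasing the two vertical boundary maps in the diagram and invoking that $\phi$ fixes $\partial\Sigma$ pointwise so that $\phi^*$ acts as the identity on the boundary cohomology.
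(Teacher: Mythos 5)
Your proposal follows essentially the same route as the paper: the paper also derives the formula from the Wang sequence of the pair $(M,\Sigma)$ together with the boundary sequence, invokes multiplicativity of the torsion, uses the lemma identifying $\alpha=\phi^*-\mathrm{id}$ with $D\phi-1$ on the cotangent space of $X(\Sigma)$, and pins down $\det H^2(M)$ via the generators $\xi_i=\rho_Y(\gamma_i)_0$ before "plugging into the sequence." Your accounting of the restriction to $\ker dY_{\gamma_i}$, the constant $\tfrac12$, and the normalization $Y_{\gamma_i}=u_i+u_i^{-1}$ matches the paper's (itself rather terse) derivation.
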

In particular, we recover the example of the magic manifold, where $\phi$ is the identity.

\end{document}